\DeclareMathOperator{\dom}{dom}
\newcommand*\rot{\rotatebox{90}}
\newcommand{\R}{{\mathbf R}}
\newcommand{\Exp}{\mathbf{E}}
\newcommand{\Prob}{\mathbf{P}}
\newcommand{\eqdef}{:=}
\newcommand{\xbi}{x^{(i)}}
\newcommand{\lng}{\langle}
\newcommand{\rng}{\rangle}
\newcommand{\h}{h^{(i)}}
\newcommand{\ii}{^{(i)}}
\newcommand{\jj}{^{(j)}}
\newcommand{\lip}{L}
\newcommand{\gfx}{\nabla f(x)}
\newcommand{\gfi}{(\nabla f(x))^{(i)}}
\newcommand{\hshx}{(h(x))_{[\hatS]}}
\newcommand{\hix}{(h(x))^{(i)}}
\newcommand{\gix}{(g(x))^{(i)}}
\newcommand{\fH}{\mathcal{H}_{v}}
\newcommand{\calJ}{\mathcal{J}}
\newcommand{\hatS}{\hat S}
\newcommand{\E}{\mathbb{E}}
\newtheorem{theorem}{Theorem}
\newtheorem{definition}[theorem]{Definition}
\newtheorem{assumption}[theorem]{Assumption}
\newtheorem{lemma}[theorem]{Lemma}
\newtheorem{corollary}[theorem]{Corollary}
\newtheorem{remark}[theorem]{Remark}
\newtheorem{example}[theorem]{Example}
\definecolor{orange}{rgb}{1,0.5,0}
\newcommand{\note}[1]{{\color{orange} #1 }}
\newcommand{\vsubset}[2]{#1_{[#2]}}
\newcommand{\vc}[2]{#1^{(#2)}}
\title{On the Complexity of Parallel Coordinate Descent}
\author{
Rachael Tappenden
  \and
  Martin Tak\'{a}\v{c}\\
  \and
   Peter Richt\'{a}rik
}
\date{}
\begin{document}

\maketitle

\begin{abstract}
In this work we study the parallel  coordinate descent method (PCDM) proposed by Richt\'arik and Tak\'a\v{c} \cite{Richtarik12a} for minimizing a regularized convex function. We adopt elements from the work of Lu and Xiao \cite{Lu13}, and combine them with several new insights, to obtain sharper iteration complexity results for PCDM than those presented in \cite{Richtarik12a}. Moreover, we show that PCDM is monotonic in expectation, which was not confirmed in \cite{Richtarik12a}, and we also derive the first high probability iteration complexity result where the initial levelset is unbounded.
\end{abstract}

\section{Introduction}

Block coordinate descent methods are being thrust into the optimization spotlight because of a dramatic increase in the size of real world problems, and because of the ``Big data'' phenomenon. It is little wonder, when these seemingly simple methods, with low iteration costs and low memory requirements, can solve problems where the dimension is more than one billion, in a matter of hours \cite{Richtarik12a}.

There is an abundance of coordinate descent variants arising in the literature including: \cite{FR:SPCDM,
jaggi2014communication,li2009coordinate,liu2013asynchronous,ma2015adding,necoara2013distributed,
necoara2012efficiency,richtarik2012efficient,
RT:NSync,Saha10,schmidtcoordinate,takac2013mini,
Tao12,TRG:Inexact,Tseng01,Tseng09,Wright12,wrightcoordinate,Wu08}.
The main differences between these methods is the way in which the block of coordinates to update is chosen, and also how the subproblem to determine the update to apply a block of variables is to be solved. The current, state-of-the-art block coordinate descent method is the Parallel (block) Coordinate Descent Method (PCDM) of Richt\'{a}rik and Tak\'a\v{c}~\cite{Richtarik12a}. This method selects the coordinates to update \emph{randomly} and the update is determined by \emph{minimizing an overapproximation of the objective function} at the current point (see Section \ref{S_PCDM} for a detailed description). PCDM can be applied to a problem with a general convex composite objective, it is supported by strong iteration complexity results to guarantee the method's convergence, and it has been tested numerically on a wide range of problems to demonstrate its practical capabilities.

In this work we are interested in the following convex composite/regularized optimization problem
\begin{equation}
\label{D_F}
     \min_{x\in \R^N} F(x) = f(x) + \Psi(x),
\end{equation}
where we assume that $f(x)$ is a continuously differentiable convex function, and $\Psi(x)$ is assumed to be a (possibly nonsmooth) block separable convex regularizer.

The Expected Separable Overapproximation (ESO) assumption introduced in \cite{Richtarik12a} enabled the development of a unified theoretical framework that guarantees convergence of a serial \cite{Richtarik12}, parallel \cite{Richtarik12a} and even distributed \cite{fercoq2014fast,marecek2014distributed,richtarik2013distributed}  version of PCDM.
To benefit from the ESO abstraction, we derive all the results in this paper
based on the assumption that $f$ admits an ESO with respect to a uniform block sampling $\hat{S}$. This concept will be precisely defined in
Section \ref{sec:ESOInAlg}.
For now it is enough to say that updating a random set of $\tau$ coordinates (selected uniformly at random) is one particular uniform sampling and the ESO enables us to overapproximate the expected value of the function at the next iteration by a separable function, which is easy to minimize in parallel.

\subsection{Brief literature review}

Nesterov \cite{Nesterov12} provided some of the earliest
iteration complexity results for a serial Randomized Coordinate Descent Method (RCDM) for problems of the form \eqref{D_F}, where $\Psi\equiv 0$, or is the indicator function for simple bound constraints. Later, this work was generalized to optimization problems with a composite objective of the form \eqref{D_F}, where the function $\Psi$ is any (possibly nonsmooth) convex (block) separable function \cite{Richtarik12,Richtarik12a}.

One of the main advantages of randomized coordinate descent methods is that each iteration is extremely cheap, and can require as little as a few multiplications in some cases \cite{richtarik2012efficient}. However, a large number of iterations may be required to obtain a sufficiently accurate solution, and for this reason, parallelization of coordinate descent methods is essential.

The SHOTGUN algorithm presented in \cite{Bradley} represents a na\"ive way of parallelizing RCDM, applied to functions of the form \eqref{D_F} where $\Psi \equiv \|\cdot\|_1$. They also present theoretical results to show that parallelization can lead to algorithm speedup. Unfortunately, their results show that only a small number of coordinates should be updated in parallel at each iteration, otherwise there is no guarantee of algorithm speedup.

The first true complexity analysis of Parallel RCDM (PCDM) was provided in \cite{Richtarik12a} after the authors developed the concept of an Expected Separable Overapproximation (ESO) assumption, which was central to their convergence analysis. The ESO gives an upper bound on the expected value of the objective function after a parallel update of PCDM has been performed, and depends on both the objective function, and the particular `sampling' (way that the coordinates are chosen) that was used. Moreover, several distributed PCDMs were considered in \cite{fercoq2014fast,marecek2014distributed,richtarik2013distributed} and their convergence was proved simply by deriving the ESO parameters for particular distributed samplings.

In \cite{Fercoq:accelerated,lin2014accelerated} the accelerated PCDM was presented and its efficient distributed implementation was considered in
\cite{fercoq2014fast}. Recently, there has also been a focus on PCDMs that use an arbitrary sampling of coordinates \cite{alpha1, alpha2,quartz,RT:NSync}.

\subsection{Summary of contributions}
In this section we summarize the main contributions of this paper (not in order of significance).
\begin{enumerate}
  \item
  \textbf{No need to enforce ``monotonicity''.}
  PCDM in \cite{Richtarik12a} was analyzed (for a general convex composite function of the form \eqref{D_F}) under a monotonicity assumption; if, at any iteration of PCDM, an update was computed that would lead to a higher objective value than the objective value at the current point, then that update is rejected. Hence, PCDM presented in \cite{Richtarik12a} included a step to force monotonicity of the function values at each iteration. In this paper we confirm that the monotonicity test is redundant, and can be removed from the algorithm.
\item \textbf{First high-probability results for PCDM without levelset information.}
Currently, the high probability iteration complexity results for coordinate descent type methods require the levelset to be bounded. In this paper we derive the first high-probability result which \emph{does not rely on the size of the levelset}. In particular, the analysis of PCDM in \cite{Richtarik12a} assumes that the levelset $\{x \in \R^N: F(x) \leq F(x_0)\}$ is bounded for the initial point $x_0$, and under this assumption, convergence is guaranteed. However, in this paper we show that PCDM will converge, in expectation, to the optimal solution even if the levelset is unbounded (see Section~\ref{S_iterationcomplexity}).

  \item \textbf{Sharper iteration complexity results.} In this work we obtain sharper iteration complexity results for PCDM than that those presented in \cite{Richtarik12a} and Table \ref{Table_Comparison_Complexity} summarizes our findings. A thorough discussion of the results can be found in Section \ref{S_Discussion_complexity}. We briefly describe the variables used in the table (all will be properly defined in later sections.) Variable $c$ is a constant, $k$ is the iteration counter, $\alpha\in [0,1]$ is the expected proportion of coordinates updated at each iteration, $\xi_0 = F(x_0)-F_*$, and $v$ is a (vector) parameter of the method. Also, $\mu_f$ and $\mu_\Psi$ are the (strong) convexity constants of $f$ and $\Psi$ respectively (both with respect to $\|\cdot\|_v$ for some $v$) and $\epsilon$ and $\rho$ are the desired accuracy and confidence level respectively. (C=Convex, SC=Strongly Convex).

      \begin{table}[h!]\centering
{\begin{tabular}{| c| c| c| c|}
\hline
$F$ & Richt\'{a}rik and Tak\'{a}\v{c} \cite{Richtarik12a} & This paper & Theorem \\
\hline
& & & \\[-0.5em]
C & $\displaystyle\frac{2c}{\alpha \epsilon}\left( 1+\log\left(\frac{1}{\rho} \right)\right) + 2 - \frac{2c}{\alpha\xi_0}$ &
$\displaystyle\frac{2 c}{\alpha \epsilon}\left(1 + \log\left(\frac{\frac12
\|x_0 - x^*\|^2_v
 + \xi_0}{2c\rho} \right) \right) + 2 - \frac{1}{\alpha}$ & \ref{T_Complexity}(i)\\
& & & \\[-0.5em]
\hline
& & & \\[-0.5em]
SC & $\displaystyle\frac{1 + \mu_\Psi}{\alpha(\mu_f + \mu_\Psi)}\log \left(\frac{\xi_0}{\epsilon \rho} \right)$
& $\displaystyle\frac{1 + \mu_f + 2\mu_\Psi}{2\alpha(\mu_f+\mu_\Psi)}\log\left(\frac{\frac{1+\mu_\Psi}{2}
\|x_0 - x^*\|^2_v
+\xi_0}{\epsilon \rho}\right)$ & \ref{T_Complexity}(ii)\\[1.5em]
\hline
\end{tabular}}
\caption{Comparison of the iteration complexity results for PCDM obtained in \cite{Richtarik12a} and in this paper. The analysis used in this paper provides a sharper iteration complexity result in both the convex and strongly convex cases when $\epsilon$ and/or $\rho$ are small.}
\label{Table_Comparison_Complexity}
\end{table}

\item \textbf{Improved convergence rates for PCDM.} In this work we show that PCDM converges at a faster rate than that given in \cite{Richtarik12a}, in both the convex and strongly convex cases. Table \ref{Table_Comparison_Rates} provides a summary of our results and a thorough discussion can be found in Section \ref{S_comparisonrate}.
\begin{table}[h!]\centering
{\begin{tabular}{| c| c| c| c|}
\hline
$F$ & Richt\'{a}rik and Tak\'{a}\v{c} \cite{Richtarik12a} & This paper & Theorem \\
\hline
& & &\\[-0.5em]
C & $\displaystyle \frac{2c\xi_0}{2c + \alpha k \xi_0}$ & $\displaystyle\frac{1}{1+\alpha k} \left(\frac12 \|x_0-x^*\|_v^2 + \xi_0 \right)$ & \ref{T_convergence_rate}(i)\\
& & &\\[-0.5em]
\hline
& & &\\[-0.5em]
SC & $\displaystyle\left( 1 - \alpha \frac{\mu_f + \mu_\Psi}{1 + \mu_\Psi} \right)^k\xi_0$
& $\displaystyle\left(1 - \frac{2\alpha (\mu_f + \mu_\Psi)}{1+\mu_f + 2\mu_\Psi}\right)^k \left(\frac{1+\mu_\Psi}{2}
\|x_0 - x^*\|^2_v
+ \xi_0 \right)$ & \ref{T_convergence_rate}(ii)\\[1.5em]
\hline
\end{tabular}}
\caption{Comparison of the convergence rates for PCDM obtained in \cite{Richtarik12a} and in this paper. (C=Convex, SC=Strongly Convex). The analysis used in this paper provides a better rate of of convergence in both the convex and strongly convex cases when $\epsilon$ and/or $\rho$ are small.}
\label{Table_Comparison_Rates}
\end{table}

\end{enumerate}

\subsection{Paper outline}

The remainder of this paper is structured as follows. In Section \ref{Section_Preliminaries} we introduce the notation and assumptions that will be used throughout the paper. Section \ref{S_PCDM} describes PCDM of Richt\'arik and Tak\'a\v c \cite{Richtarik12a} in detail. We also present a new convergence rate result for PCDM, which is sharper than that presented in \cite{Richtarik12a}. The proof of the result is given in Section \ref{sec:proof} along with several necessary technical lemmas.

In Section \ref{S_iterationcomplexity} we present several iteration complexity results, which show that PCDM will converge to an $\epsilon$-optimal solution with high probability. In Section \ref{sec:HPR:Case1} we provide the first iteration complexity result for PCDM that does not require the assumption of a bounded levelset. The results shows that PCDM requires $\mathcal{O}(\frac{1}{\rho})$ iterations, so we have devised a `multiple run strategy' that achieves the classical $\mathcal{O}(\log \frac{1}{\rho})$ result. Moreover, in Section \ref{sec:HPR:Case1} we present a high probability iteration complexity result for PCDM, that assumes boundedness of the levelset, which is sharper than the result given in \cite{Richtarik12a}.

In Section \ref{sec:discusion} we give a comparison of the results derived in this work, with the results given in \cite{Richtarik12a}. Then, we present several numerical experiments in Section~\ref{sec:numerical} to highlight the practical capabilities of PCDM under different ESO assumptions. The ESO assumptions are given in Appendix \ref{sec:ESO}, where we also provide a new ESO for doubly uniform samplings (see Theorem \ref{thm:NewESOforDUS}).

\section{Notation and assumptions}
\label{Section_Preliminaries}

In this section we introduce block structure and associated objects such as norms and projections. The parallel (block) coordinate descent method will operate on blocks instead of coordinates.

\subsection{Block structure}
\label{S_Block_structure}
The problem under consideration is assumed to have block structure and this is modelled by decomposing the space $\R^N$ into $n$ subspaces as follows. Let $U \in \R^{N \times  N}$ be a column permutation of the $N \times N$ identity matrix and further let $U = [U_1,U_2,\dots,U_n]$ be a decomposition of $U$ into $n$ submatrices, where $U_i$ is $N \times N_i$ and $\sum_{i=1}^n N_i = N$. Note that $U_i^TU_j = I_{N_i}$ when $i = j$ and $U_i^TU_j = \mathbf{0}$ (where $\mathbf{0}$ is the $N_i \times N_j$ matrix of all zeros) when $i \neq j$. Subsequently, any vector $x \in \R^N$ can be written uniquely as
\begin{equation}
\label{D_xdecomp}
x = \sum_{i=1}^n U_ix^{(i)}
\end{equation}
where $x^{(i)} = U_i^Tx \in \R^{N_i}$. For simplicity we will write $x = (x^{(1)},x^{(2)},\dots,x^{(n)})^T$.

In what follows let $\langle \cdot,\cdot \rangle$ denote the standard Euclidean inner product. Then we have
\begin{equation}
\label{D_innerprod}
  \langle x,y \rangle = \left\langle \sum_{i=1}^nU_ix\ii,\sum_{j=1}^nU_jy\jj \right\rangle = \sum_{i=1}^n\sum_{j=1}^n\langle U_j^TU_ix\ii,y\jj \rangle \equiv \sum_{i=1}^n\langle x\ii,y\ii \rangle.
\end{equation}

\paragraph{Norms.}

Further we equip $\R^{N_i}$ with a pair of conjugate Euclidean norms:
\begin{eqnarray}
\label{D_norm_Bi}
  \|h\|_{(i)} \eqdef \langle B_ih,h \rangle^{\frac{1}{2}}, \qquad \|h\|_{(i)}^* = \langle B_i^{-1}h,h \rangle^{\frac{1}{2}}, \qquad h \in \R^{N_i},
\end{eqnarray}
where $B_i \in \R^{N_i \times N_i}$ is a positive definite matrix. For fixed positive scalars $v_1,v_2,\dots,v_n$, let $v = (v_1,\dots,v_n)^T$ and define a pair of conjugate norms in $\R^N$ by
\begin{equation}
\label{D_norm_v}
     \|x\|_v^2 \eqdef  \sum_{i=1}^n v_i \|x^{(i)}\|^2_{(i)}, \quad   (\|y\|_v^*)^2 \eqdef \max_{\|x\|_v \leq 1} \langle y,x \rangle ^2 = \sum_{i=1}^n \frac{1}{v_i} (\|y^{(i)}\|^*_{(i)})^2.
\end{equation}

\paragraph{Projection onto a set of blocks.}

Let $\emptyset \neq S \subseteq \{1,2,\dots,n\}$. Then for $x \in \R^N$ we write
\begin{equation}
\vsubset{x}{S} \eqdef \sum_{i \in S}U_i \xbi,
\end{equation}
and we define $x_{[\emptyset]} \equiv 0$. That is, given $x \in \R^N$, $\vsubset{x}{S}$ is the vector in $\R^N$ whose blocks $i\in S$ are identical to those of $x$, but whose other blocks are zeroed out.

\subsection{Assumptions and strong convexity}

Throughout this paper we make the following assumption regarding the block separability of the function $\Psi$.
\begin{assumption}[Block separability]
\label{SS_separability}
The nonsmooth function $\Psi:\R^N \to \R\cup \{+\infty\}$ is assumed to be block separable, i.e., it can be decomposed as:
\begin{equation}
     \label{D_Psi_sep}
\Psi(x) = \sum_{i=1}^n \Psi_i (x\ii),
\end{equation}
where the functions $\Psi_i: \R^{N_i} \to \R\cup \{+\infty\}$ are
 proper, closed and convex.

\end{assumption}

In some of the results presented in this work we assume that $F$ is strongly convex and we denote the (strong) convexity parameter of $F$, with respect to the norm $\| \cdot \|_v$ for some $v\in \R^n_{++}$, by $\mu_F >0$.  A function $\phi: \R^N \to \R \cup \{+ \infty\}$ is strongly convex with respect to the norm $\| \cdot \|_v$ with convexity parameter $\mu_{\phi}  \geq 0$ if for all $x,y \in \dom \phi$,
\begin{equation}
\label{strongly_convex_1}
     \phi(y) \geq \phi(x) + \langle \phi^{\prime}(x),y-x \rangle + \frac{\mu_{\phi} }{2}\|y-x\|_v^2,
\end{equation}
where $\phi^{\prime}$ is any subgradient of $\phi$ at $x$. The case with $\mu_{\phi}= 0$ reduces to convexity.

Strong convexity of $F$ may come from $f$ or $\Psi$ or both and we will write $\mu_f$ (resp. $\mu_{\Psi}$) for the strong convexity parameter of $f$ (resp. $\Psi$). Following from \eqref{strongly_convex_1}
\begin{equation}
     \label{strongly_convex_4}
\mu_F \geq \mu_f + \mu_\Psi.
\end{equation}

From the first order optimality conditions for \eqref{D_F} we obtain $\langle F^{\prime}(x_*),x-x_* \rangle \geq 0$ for all $x \in$ dom$F$. Combining this with \eqref{strongly_convex_1} used with $y = x$ and $x = x_*$, yields the standard inequality
\begin{equation}
\label{strongly_convex_2}
   F(x) - F_* \geq \frac{\mu_F }{2} \|x-x_*\|_{v}^2,  \qquad x \in \rm{dom} F.
\end{equation}

\section{Parallel coordinate descent method}
\label{S_PCDM}
In this section we describe the Parallel Coordinate Descent Method (Algorithm \ref{PCDM}) of Richt\'arik and Tak\'a\v c \cite{Richtarik12a}. We now present the algorithm, and a detailed discussion will follow.
\begin{algorithm}[H]
\caption{PCDM: Parallel Coordinate Descent Method \cite{Richtarik12a} }\label{PCDM}
\begin{algorithmic}[1]
\State choose initial point $x_0 \in\R^N$
\For{$k = 0,1,2,\dots$}
\State randomly choose set of blocks $S_k \subseteq \{1,\dots,n\}$
\label{alg:line:sampling}
  \For{$i \in S_k$ ({\bf in parallel})}
  \State  compute
$\vc{h(x_k)}{i} = \arg \min_{t \in \R^{N_i}} \Big\{\langle \vc{(\nabla f(x_k))}{i},t \rangle + \frac{  v_i}{2} \|t\|_{(i)}^2 + \Psi_i(\vc{x_k}{i}+t)\Big\}
$
\label{alg:line:stepsize}
  \EndFor
\State apply the update: $x_{k+1} \gets x_k + \sum_{i\in S_k} U_i \vc{h(x_k)}{i}$
\EndFor
\end{algorithmic}
\end{algorithm}

The algorithm can be described as follows. At iteration $k$ of Algorithm \ref{PCDM}, a set of blocks $S_k$ is chosen, corresponding to the (blocks of) coordinates that are to be updated. The set of blocks is selected via a \emph{sampling}, which is described in detail in Section \ref{S_sampling}. Then, in Steps 4--6, the updates $h(x_k)\ii$, for all $i \in S_k$, are computed \emph{in parallel}, via a small/low dimensional minimization subproblem. (In Section \ref{sec:ESOInAlg}, we describe the origin of this subproblem via an ESO.) Finally, in Step 7, the updates $h(x_k)\ii$ are applied to the current point $x_k$, to give the new point $x_{k+1}$. Notice that Algorithm \ref{PCDM} \emph{does not require knowledge of objective function values.}

We now describe the key steps of Algorithm \ref{PCDM} (Steps 3 and 4--6) in more detail.

\subsection{Step \ref{alg:line:sampling}: Sampling}
\label{S_sampling}
At the $k$th iteration of Algorithm \ref{PCDM}, a set of indices $S_k\subseteq \{1,\dots,n\}$ (corresponding to the blocks of $x_k$ to be updated) is selected.
Here we briefly explain several schemes for choosing the set of indices $S_k$; a thorough description can be found in \cite{Richtarik12a}. Formally, $S_k$ is a realisation of a \emph{random set-valued mapping} $\hatS$ with values in $2^{\{1,\dots,n\}}$. Richt\'arik and Tak\'a\v c \cite{Richtarik12a} have coined the term \emph{sampling} in reference to $\hatS$.

In what follows, we will assume that all samplings are \emph{proper}. That is, we assume that $p_i>0$ for all blocks $i$, where $p_i$ is the probability that the $i$th block of $x$ is updated.

We state several sampling schemes now.
\begin{enumerate}
\item \textbf{Uniform:} A sampling $\hatS$ is uniform if all blocks have the same probability of being updated.
\item \textbf{Doubly uniform:} A doubly uniform sampling is one that generates all sets of equal cardinality with equal probability. That is $\Prob(S') = \Prob(S'')$ whenever $|S'|=|S''|$.
\item \textbf{Nonoverlapping uniform:} A nonoverlapping uniform sampling is one that is uniform and assigns positive probabilities only to sets forming a partition of $\{1,\dots,n\}$.
\end{enumerate}
In fact, doubly uniform and nonoverlapping uniform samplings are special cases of uniform samplings, so in this work all results are proved for uniform samplings. Other samplings, which are also special cases of uniform samplings, are presented in \cite{Richtarik12a}, but we omit details of all, except a $\tau$-nice sampling, for brevity. 
We say that a sampling $\hatS$ is $\tau$-nice, if for any $S \subseteq \{1,2,\dots,n\}$ we have
\begin{equation}
 \Prob(\hatS = S)=\begin{cases}
                     0, &\mbox{if} \ |S|\neq\tau,\\
                     \frac{\tau! (n-\tau)!}{n!}
                     ,&\mbox{otherwise}.
                    \end{cases}
\end{equation}

\subsection{Step
\ref{alg:line:stepsize}: Computing the step-length} \label{sec:ESOInAlg}
The block update $h(x_k)\ii$ is chosen in such a way that an upper bound on the expected function value at the next iterate is minimized, with respect to the particular sampling $\hatS$ that is used.
The construction of the expected upper bound should be (block) separable to ensure efficient parallelizability.
Before we focus on how to construct the expected upper-bound on $F$ we
will state a definition of ESO.
\begin{definition}[Expected Separable Overapproximation; Definition~5 in \cite{Richtarik12a}]\label{Def_ESO}
Let $v\in \R_{++}^n$
and  $\hatS$ be a proper uniform sampling. We say that $f:\R^N \to \R$ admits an ESO with respect to the sampling $\hatS$ with parameter  $v$, if, for all $x, h \in \R^N$ the following inequality holds:
\begin{equation}
\label{eq:ESOdef}
\Exp[f(x+\vsubset{h}{\hatS}) ] \leq f(x) + \frac{\Exp[|\hatS|]}{n} \left(\lng \nabla f(x),h\rng + \frac{1}{2}\|h\|_v^2\right).
\end{equation}
We say that the ESO is \emph{monotonic} if $\forall S \in \hatS$ such that $ \Prob(S=\hatS) > 0$ the following holds:
\begin{equation*}
 f(x+\vsubset{h}{S})   \leq f(x).
\end{equation*}
\end{definition}
In Appendix \ref{sec:ESO}, a review of different smoothness assumptions on $f$ and corresponding ESO parameters $v$ for a doubly uniform sampling, is given. In all that follows, we assume that $f$ admits an ESO, and that $v$ is the ESO parameter and $\hatS$ is a proper uniform sampling. Then
\begin{align}
\nonumber
\Exp[F(x+\vsubset{h}{\hatS})]
\overset{\eqref{D_F}}{=}&
\Exp[f(x+\vsubset{h}{\hatS})
]
+
\Exp[\Psi(x+\vsubset{h}{\hatS})
]
\\
\overset{\eqref{eq:ESOdef}
\eqref{eq:L_blockseparable}
}{\leq}&
f(x) + \tfrac{\Exp[|\hatS|]}{n} \left(\lng \nabla f(x),h\rng + \tfrac{1}{2}\|h\|_v^2\right)
+
\left(1-\tfrac{\Exp[|\hatS|]}{n}\right) \Psi(x)
+
\tfrac{\Exp[|\hatS|]}{n} \Psi(x+h),
\label{eq:safvfeavfwafcwa}
\end{align}
where we have used that fact that $\Psi$ is block separable and that $\hatS$ is a proper uniform sampling (see \cite[Theorem~4]{Richtarik12a}).

Now, it is easy to see that minimizing the right hand side of \eqref{eq:safvfeavfwafcwa} in $h$ is the same as minimizing the function $\fH$ in $h$, where $\fH$ is defined to be
\begin{equation}
\label{Def_H}
\fH(x,h) \eqdef f(x) + \langle \nabla f(x),h \rangle + \frac{1}{2} \|h\|_v^2 + \Psi(x+h).
\end{equation}
In view of \eqref{D_xdecomp}, \eqref{D_norm_v}, and \eqref{D_Psi_sep}, we can write
\begin{equation*}
\fH(x,h) \eqdef f(x) + \sum_{i=1}^n\Big\{\langle \gfi,\h \rangle + \frac{v_i}{2} \|\h\|_{(i)}^2 + \Psi_i(\xbi+\h)\Big\}.
\end{equation*}
Further, we define
\begin{equation}\label{E_argminhi}
h(x) \eqdef \arg \min_{h \in \R^N}  \fH(x,h),
\end{equation}
which is the update used in Algorithm \ref{PCDM}. Notice that the algorithm \emph{never evaluates function values.}

\subsection{Complexity of PCDM}


We are now ready to present one of our main results, which is a generalization of Theorem~1 in \cite{Lu13}. The result shows that PCDM converges in expectation and provides an sharper convergence rate than that given in \cite{Richtarik12a}. The proof is provided in Section~\ref{sec:proof}. Let us mention that a similar result was given independently\footnote{A preliminary version of this paper was ready in August 2013.} in \cite{necoara2013distributed}, but that result \emph{only holds for the particular ESO described in Theorem~\ref{thm:IonESO}}. However, even for that ESO, our result (Theorem \ref{T_convergence_rate}) is still much better because it depends on $\|x_0-x_*\|_v$ and not on the size of the initial levelset (which could even be unbounded). We state our result now.
\begin{theorem}
\label{T_convergence_rate}
Let $F^*$ be the optimal value of problem \eqref{D_F}, and let $\{x_k\}_{k\geq 0}$ be the sequence of iterates generated by PCDM using a uniform sampling $\hat{S}$. Let $\alpha = \tfrac{\Exp[|\hat{S}|]}{n}$
and suppose that $f$ admits an ESO with respect to the sampling $\hatS$ with parameter $v$. Then for any $k \geq 0$,
\begin{itemize}
  \item[(i)] the iterate $x_k$ satisfies
  \begin{equation}
\label{T_complexityC}
\Exp[F(x_{k})-F_*] \leq \frac{1}{1+\alpha k} \left(\frac{1}{2}\|x_0-x_*\|_v^2 + F(x_{0}) - F_* \right),
\end{equation}
\item[(ii)] if $\mu_f + \mu_\Psi >0$, then the iterate $x_k$ satisfies
\begin{equation}
\label{T_complexitySC}
\Exp[F(x_k) - F_*] \leq \left(1-\frac{2\alpha (\mu_f  + \mu_\Psi )}{1+\mu_f  + 2\mu_\Psi }\right)^k \left(\frac{1 +\mu_\Psi }{2} \|x_0-x_*\|_v^2 + F(x_{0}) - F_* \right).
\end{equation}

\end{itemize}
\end{theorem}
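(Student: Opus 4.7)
The plan is to prove both parts via a Lyapunov function $L_k \eqdef F(x_k) - F_* + \tfrac{c}{2}\|x_k - x_*\|_v^2$, with $c = 1$ in (i) and $c = 1 + \mu_\Psi$ in (ii). Three ingredients drive the argument. First, applying the ESO with $h = h(x_k)$ together with the identity $\Exp[\Psi(x + h_{[\hat S]})] = (1-\alpha)\Psi(x) + \alpha\Psi(x+h)$, which holds under any proper uniform sampling by block-separability of $\Psi$, yields
\begin{equation*}
\Exp[F(x_{k+1}) \mid x_k] \leq (1-\alpha)F(x_k) + \alpha\,\mathcal{H}_v(x_k, h(x_k)).
\end{equation*}
Second, since every block is updated with probability exactly $\alpha = \Exp[|\hat S|]/n$ under a proper uniform sampling, expanding the block-separable squared norm gives
\begin{equation*}
\Exp\bigl[\|x_{k+1} - x_*\|_v^2 \mid x_k\bigr] = (1-\alpha)\|x_k - x_*\|_v^2 + \alpha\,\|x_k + h(x_k) - x_*\|_v^2.
\end{equation*}
Third, $\mathcal{H}_v(x_k, \cdot)$ is $(1+\mu_\Psi)$-strongly convex with respect to $\|\cdot\|_v$ (the quadratic term contributes $1$, the shifted $\Psi$ contributes $\mu_\Psi$); comparing its value at the minimizer $h(x_k)$ with its value at $h = x_* - x_k$, and estimating $\mathcal{H}_v(x_k, x_* - x_k)$ via $\mu_f$-strong convexity of $f$, produces the three-point inequality
\begin{equation*}
\mathcal{H}_v(x_k, h(x_k)) - F_* + \tfrac{1+\mu_\Psi}{2}\|x_k + h(x_k) - x_*\|_v^2 \leq \tfrac{1-\mu_f}{2}\|x_k - x_*\|_v^2.
\end{equation*}

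For part (i), take $c = 1$ and $\mu_f = \mu_\Psi = 0$. Adding $\tfrac12$ times the second identity to the first bound, the quantity $\|x_k + h(x_k) - x_*\|_v^2$ is absorbed exactly by the three-point inequality, leaving
\begin{equation*}
\Exp[L_{k+1} \mid x_k] \leq L_k - \alpha\bigl(F(x_k) - F_*\bigr).
\end{equation*}
In-expectation monotonicity $\Exp[F(x_{k+1}) \mid x_k] \leq F(x_k)$ follows from $\mathcal{H}_v(x_k, h(x_k)) \leq \mathcal{H}_v(x_k, 0) = F(x_k)$ plugged into the first ingredient, so $\Exp[F(x_k) - F_*]$ is non-increasing in $k$. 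Taking unconditional expectations and telescoping the Lyapunov recursion from $0$ to $k-1$, lower-bounding each $\Exp[F(x_j) - F_*]$ by $\Exp[F(x_k) - F_*]$, and using $\Exp[L_k] \geq \Exp[F(x_k) - F_*]$, delivers $(1 + \alpha k)\Exp[F(x_k) - F_*] \leq L_0$, which is (i).

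For part (ii), take $c = 1 + \mu_\Psi$. The analogous combination of the three ingredients produces
\begin{equation*}
\Exp[L_{k+1} \mid x_k] \leq (1-\alpha)\bigl(F(x_k) - F_*\bigr) + \tfrac{(1+\mu_\Psi)(1-\alpha) + \alpha(1-\mu_f)}{2}\|x_k - x_*\|_v^2.
\end{equation*}
A direct term-by-term comparison with $L_k$ only yields the suboptimal rate $1 - \alpha(\mu_f + \mu_\Psi)/(1 + \mu_\Psi)$. To reach the claimed rate, I will use the $F$-strong-convexity bound $\tfrac{\mu_f + \mu_\Psi}{2}\|x_k - x_*\|_v^2 \leq F(x_k) - F_*$ from \eqref{strongly_convex_2} to transfer a fraction $\lambda$ of the quadratic coefficient onto the function-gap coefficient. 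The two resulting contraction factors coincide when $\lambda = (1-\mu_f)/(1 + \mu_f + 2\mu_\Psi)$, at which value both reduce exactly to $\rho = 1 - 2\alpha(\mu_f + \mu_\Psi)/(1 + \mu_f + 2\mu_\Psi)$. Iterating $\Exp[L_{k+1} \mid x_k] \leq \rho L_k$ and dropping the non-negative quadratic term on the left then yields (ii).

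The main obstacle is the interpolation step in (ii). The weight $c = 1 + \mu_\Psi$ is forced on us by the three-point bound, but a naive term-by-term comparison is suboptimal. Trading function gap for distance via the strong convexity of $F$, and choosing $\lambda$ so that the two resulting contractions are equal, is precisely where the sharper denominator $1 + \mu_f + 2\mu_\Psi$ comes from. The routine verifications are the $(1+\mu_\Psi)$-strong convexity of $\mathcal{H}_v(x,\cdot)$ and the block-probabilistic identity for $\Exp[\|\cdot\|_v^2]$ under a proper uniform sampling.
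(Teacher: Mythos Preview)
Your proposal is correct and follows essentially the same approach as the paper: the same Lyapunov function $\tfrac{c}{2}\|x_k-x_*\|_v^2 + F(x_k)-F_*$ (with $c=1$ and $c=1+\mu_\Psi$), the same one-step recursion coming from the ESO bound combined with the three-point inequality, the same telescoping plus expected monotonicity for (i), and the same strong-convexity trade-off to equalize the two contraction factors in (ii), yielding $\gamma^*=2(\mu_f+\mu_\Psi)/(1+\mu_f+2\mu_\Psi)$. The only packaging difference is that you obtain the three-point inequality directly from $(1+\mu_\Psi)$-strong convexity of $\fH(x,\cdot)$ evaluated at its minimizer versus at $h=x_*-x$, whereas the paper routes through the composite gradient mapping $g(x)$ and the identities \eqref{E_CS}, \eqref{eq:sjasujdks}; both arrive at exactly the same bound $\fH(x,h(x)) \le F_* + \tfrac{1-\mu_f}{2}\|x-x_*\|_v^2 - \tfrac{1+\mu_\Psi}{2}\|x+h(x)-x_*\|_v^2$.
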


\begin{remark}
  Notice that Theorem \ref{T_convergence_rate} is a \emph{general} result, in the sense that \emph{any ESO can be used for PCDM} and the result holds.
\end{remark}
\section{Proof of the main result}
\label{sec:proof}
In this section we provide a proof of our main convergence rate result, Theorem \ref{T_convergence_rate}. However, first we will present several preliminary results, including the idea of a composite gradient mapping, and other technical lemmas.

\subsection{Block composite gradient mapping}
We now define the concept of a \emph{block composite gradient mapping} \cite{nesterov2007gradient, Lu13}.
By the first-order optimality conditions for problem \eqref{E_argminhi}, there exists a subgradient $s\ii \in \partial \Psi_i(x\ii + \hix)$ (where $\partial \Psi_i(\cdot)$ denotes the subdifferential of $\Psi_i(\cdot)$) such that
\begin{equation}\label{eq:sjd7d9d}
\gfi +  v_i B_i \hix + s\ii = 0.
\end{equation}
We define the block composite gradient mappings as
\begin{equation}
\label{D_gi}
\gix \eqdef -v_i B_i \hix, \qquad i = 1,\dots,n.
\end{equation}
From \eqref{eq:sjd7d9d} and \eqref{D_gi} we obtain
\begin{equation} \label{eq:shdud9d9}
-\gfi + \gix \in \partial \Psi_i(\xbi + \hix),\qquad i = 1,\dots,n.
\end{equation}
If we let
$\displaystyle
g(x) \eqdef \sum_{i=1}^n U_i \gix$ (compare \eqref{D_xdecomp} and \eqref{D_gi}),
then since $\Psi$ is separable,  \eqref{eq:shdud9d9} can be written as
\begin{equation}
\label{E_subgradfg}
-\gfx + g(x) \in \partial \Psi(x + h(x)).
\end{equation}
Moreover
\begin{equation}
\label{E_hgnorm_equiv}
  \|h(x)\|_v^2 \overset{\eqref{D_norm_v}}{=}  \sum_{i=1}^n v_i  \|\hix\|_{(i)}^2 \overset{\eqref{D_gi}}{=} \sum_{i=1}^n \frac{1}{v_i}  \|B_i^{-1}\gix\|_{(i)}^2 \overset{\eqref{D_norm_Bi}+\eqref{D_norm_v}}{=}  (\|g(x)\|_v^*)^2,
\end{equation}
and
\begin{equation}
\label{E_CS}
\lng g(x),h(x)\rng \overset{\eqref{D_innerprod}+\eqref{D_gi}}{=} -  \|h(x)\|_v^2 \overset{\eqref{E_hgnorm_equiv}}{=} - (\|g(x)\|_v^*)^2.
\end{equation}


Finally, note that using \eqref{D_norm_Bi}, \eqref{D_norm_v},  \eqref{D_gi} and \eqref{E_hgnorm_equiv}, we get
\begin{equation}\label{eq:sjasujdks}
\| x+ h(x) -y \|_v^2 \; \; = \;\; \|x-y\|_v^2 + 2\langle g(x), y-x \rangle +  \left( \|g(x)\|_v^* \right)^2.
\end{equation}

\subsection{Main technical lemmas}

The following result   concerns the expected value of a block-separable function when a random subset of coordinates is updated.
\begin{lemma}[Theorem 4 in \cite{Richtarik12a}]
\label{L_blockseparable}
Suppose that $\Psi(x) = \sum_{i=1}^n \Psi_i(x\ii)$. For any $x,h \in \R^N$, if we choose a uniform sampling $\hatS$, then letting  $\alpha = \frac{\Exp[|\hatS|]}{n}$, we have
\begin{equation}
\label{eq:L_blockseparable}
\Exp[\Psi(x+\hshx)] = \alpha \Psi(x+h(x)) + (1-\alpha)\Psi(x).
\end{equation}
\end{lemma}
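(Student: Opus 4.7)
The plan is to exploit the block separability of $\Psi$ to split $\Psi(x + \hshx)$ into a sum of one-block contributions that depend on $\hatS$ only through whether each index $i$ is included or not, and then use the defining property of a uniform sampling to compute the probability of inclusion.

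First I would fix an arbitrary realization $S$ of $\hatS$. By the definition of $\vsubset{h(x)}{S}$ given in Section~\ref{Section_Preliminaries}, the $i$-th block of $x + \vsubset{h(x)}{S}$ equals $x^{(i)} + h(x)^{(i)}$ if $i \in S$ and equals $x^{(i)}$ otherwise. Combining this with Assumption~\ref{SS_separability}, I can write
\begin{equation*}
\Psi(x+\vsubset{h(x)}{S}) \;=\; \sum_{i \in S} \Psi_i\bigl(x^{(i)} + h(x)^{(i)}\bigr) \;+\; \sum_{i \notin S} \Psi_i\bigl(x^{(i)}\bigr).
\end{equation*}
Introducing the indicator random variables $\chi_i := \mathbf{1}_{[i \in \hatS]}$, the identity above can be rewritten over all $i \in \{1,\dots,n\}$ as a single sum, so that taking expectations (and using linearity) gives
\begin{equation*}
\Exp[\Psi(x+\hshx)] \;=\; \sum_{i=1}^n \Exp[\chi_i]\,\Psi_i\bigl(x^{(i)}+h(x)^{(i)}\bigr) \;+\; \sum_{i=1}^n \bigl(1 - \Exp[\chi_i]\bigr)\,\Psi_i\bigl(x^{(i)}\bigr).
\end{equation*}

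Next I would use the hypothesis that $\hatS$ is a uniform sampling: by definition, the marginal probabilities $p_i = \Exp[\chi_i] = \Prob(i \in \hatS)$ are equal across all blocks. Since $\sum_{i=1}^n \chi_i = |\hatS|$, taking expectations yields $n p_i = \Exp[|\hatS|]$, hence $\Exp[\chi_i] = \Exp[|\hatS|]/n = \alpha$ for every $i$. Substituting this constant into the previous display and regrouping using the block separability formula \eqref{D_Psi_sep} produces exactly $\alpha\,\Psi(x + h(x)) + (1-\alpha)\,\Psi(x)$, which is the claimed identity.

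The only subtlety worth flagging is the last step, where it is essential that $p_i$ is independent of $i$; this is precisely what ``uniform sampling'' buys us and is why the argument would fail for a general proper sampling (where one would only obtain a weighted sum of the $\Psi_i$ terms rather than a clean factor of $\alpha$). No further tools are needed: block separability plus linearity of expectation plus the marginal-probability computation carry the whole proof.
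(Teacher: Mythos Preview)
Your argument is correct: block separability reduces the expectation to a sum over blocks weighted by the marginal inclusion probabilities $p_i=\Prob(i\in\hatS)$, and uniformity forces $p_i=\Exp[|\hatS|]/n=\alpha$ for every $i$, giving the claimed identity. The paper does not supply its own proof of this lemma; it is quoted directly from \cite{Richtarik12a} (Theorem~4 there), and the argument you give is essentially the one found in that reference.
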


The following technical lemma plays a central role in our analysis. The result can be viewed as a generalization of  Lemma 3 in \cite{Lu13}, which considers the serial case ($\alpha=1$), to the parallel setting.

\begin{lemma}
\label{L_3}
Let $x \in \dom F$ and $x_+ = x + (h(x))_{[\hat{S}]}$, where $\hat{S}$ is any uniform sampling. Then for any $y \in \dom F$,
\begin{eqnarray}
\Exp\left [F(x_+) + \tfrac{\mu_{\Psi} +1}{2}\|x_+ - y \|_v^2\right] &\leq & F(x) + \tfrac{\mu_{\Psi} +1}{2}\|x-y\|_v^2 \notag \\
&& \; - \alpha \left(F(x)-F(y) + \tfrac{\mu_f +\mu_\Psi }{2}\|x-y\|_v^2\right).\label{eq:sjs8sjs8}
\end{eqnarray}
Moreover,
\begin{enumerate}
\item[(i)]
 \begin{equation}
  \label{C_monodecrease}
\Exp\left[F(x_+)\right] \leq F(x) - \frac{\alpha}{2} (\mu_\Psi + 1)\|h(x)\|_v^2 = F(x)- \frac{\alpha}{2 } (\mu_\Psi + 1)(\|g(x)\|_v^*)^2,
\end{equation}
\item[(ii)]
\begin{eqnarray}
\Exp\left [F(x_+) + \tfrac{1}{2}\|x_+ - y \|_v^2\right] &\leq & F(x) + \tfrac{1}{2}\|x-y\|_v^2 - \alpha \left(F(x)-F(y)\right).\label{C_trivial}
\end{eqnarray}

\end{enumerate}
\end{lemma}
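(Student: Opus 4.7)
The plan is to control the two expectations in \eqref{eq:sjs8sjs8} separately. For $\Exp[F(x_+)]$, the ESO \eqref{eq:ESOdef} handles the $f$-part and Lemma \ref{L_blockseparable} handles the $\Psi$-part, giving $\Exp[F(x_+)] \leq F(x) + \alpha\bigl(\langle \nabla f(x), h(x)\rangle + \tfrac12\|h(x)\|_v^2 + \Psi(x+h(x)) - \Psi(x)\bigr)$. For $\Exp[\|x_+ - y\|_v^2]$, the squared $v$-norm is itself block separable, so the same averaging argument used for Lemma \ref{L_blockseparable} yields $\Exp[\|x_+ - y\|_v^2] = \alpha\|x + h(x) - y\|_v^2 + (1-\alpha)\|x-y\|_v^2$, which via identity \eqref{eq:sjasujdks} equals $\|x-y\|_v^2 + \alpha\bigl(2\langle g(x), y-x\rangle + (\|g(x)\|_v^*)^2\bigr)$. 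Substituting both into the LHS of \eqref{eq:sjs8sjs8}, cancelling $F(x) + \tfrac{\mu_\Psi+1}{2}\|x-y\|_v^2$ from both sides, and dividing by $\alpha$, the claim reduces to a pointwise inequality free of expectations.

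To bound $F(y) - F(x)$ from below on the RHS of that reduced inequality, I would invoke strong convexity twice. First, $f(y) \geq f(x) + \langle \nabla f(x), y-x \rangle + \tfrac{\mu_f}{2}\|y-x\|_v^2$. Second, since $-\nabla f(x) + g(x) \in \partial \Psi(x + h(x))$ by \eqref{E_subgradfg}, strong convexity of $\Psi$ yields $\Psi(y) \geq \Psi(x+h(x)) + \langle -\nabla f(x) + g(x),\, y - x - h(x)\rangle + \tfrac{\mu_\Psi}{2}\|y - x - h(x)\|_v^2$. Re-expanding $\|y - x - h(x)\|_v^2$ by \eqref{eq:sjasujdks} turns the $\mu_\Psi$-contribution into terms proportional to $\|x-y\|_v^2$, $\langle g(x), y-x\rangle$, and $(\|g(x)\|_v^*)^2$, while \eqref{E_CS} rewrites the linear term $\langle g(x), y - x - h(x)\rangle$ as $\langle g(x), y-x\rangle + (\|g(x)\|_v^*)^2$. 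Adding the two strong-convexity inequalities and collecting like terms on both sides, the $\nabla f(x)$ terms cancel, the $\langle g(x), y-x\rangle$ coefficients match exactly at $\mu_\Psi + 1$, and the whole reduced inequality collapses to $\tfrac12\|h(x)\|_v^2 + \tfrac{\mu_\Psi+1}{2}(\|g(x)\|_v^*)^2 \leq \bigl(1 + \tfrac{\mu_\Psi}{2}\bigr)(\|g(x)\|_v^*)^2$, which is an equality by \eqref{E_hgnorm_equiv}. The main obstacle is pure bookkeeping: the coefficient $\tfrac{\mu_\Psi+1}{2}$ in \eqref{eq:sjs8sjs8} is not cosmetic but is precisely what is needed so that the $\mu_\Psi$-term from strong convexity of $\Psi$, together with the cross-terms from re-expanding $\|y - x - h(x)\|_v^2$, exactly absorbs the $\tfrac12\|h(x)\|_v^2$ produced by the ESO.

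For part (i) I would set $y = x$ in \eqref{eq:sjs8sjs8}; the $\alpha$-term on the RHS vanishes, and $\Exp[\|x_+ - x\|_v^2] = \alpha\|h(x)\|_v^2$ follows from block separability applied to $\|h_{[\hat S]}\|_v^2$, giving \eqref{C_monodecrease} after one invocation of \eqref{E_hgnorm_equiv}. For part (ii) I would set $\mu_f = \mu_\Psi = 0$ in \eqref{eq:sjs8sjs8}; the strong-convexity inequalities used above remain valid as ordinary convexity bounds, so this weaker statement does not require $F$ to be strongly convex and follows with no additional work.
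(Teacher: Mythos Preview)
Your proposal is correct and follows essentially the same route as the paper: both proofs combine the ESO bound on $\Exp[f(x_+)]$, Lemma~\ref{L_blockseparable} for $\Exp[\Psi(x_+)]$ and for $\Exp[\|x_+-y\|_v^2]$, the two strong-convexity inequalities (for $f$ at $x$ and for $\Psi$ at $x+h(x)$ via the subgradient \eqref{E_subgradfg}), and the identities \eqref{E_CS}, \eqref{E_hgnorm_equiv}, \eqref{eq:sjasujdks}. The only difference is organizational: the paper packages the deterministic part as an upper bound on $\fH(x,h(x))$ and then substitutes back the expression for $\Exp[\|x_+-y\|_v^2]$, whereas you cancel $F(x)+\tfrac{\mu_\Psi+1}{2}\|x-y\|_v^2$ up front and verify the residual scalar inequality directly; the algebra is identical and parts~(i) and~(ii) are obtained in both cases by specializing to $y=x$ and to $\mu_f=\mu_\Psi=0$ respectively.
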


\begin{proof}
We first note that
\begin{equation}\label{eq:iuweiuhd00}
\Exp\left[\|x_+ - y\|_v^2\right] \;\; = \;\; \alpha \|x+h(x)-y\|_v^2 + (1-\alpha)\|x-y\|_v^2.
\end{equation}
This is a special case of the identity $\Exp[\psi(u+h_{[\hat{S}]})] = \alpha \psi(u+h) + (1-\alpha)\psi(u)$ (see Lemma \ref{L_blockseparable}, which holds for block separable functions $\psi$),  with $\psi(u) = \|u\|_v^2$, $u=x-y$ and $h=h(x)$.

Further,  for any $h$ for which $x+h \in \dom \Psi$, we have
\begin{equation}\label{eq:jdyugsd87ds}
\Exp[F(x + \vsubset{h}{\hatS} )]
\overset{\eqref{eq:L_blockseparable}
}{\leq} (1-\alpha)F(x) + \alpha \fH(x,h).
\end{equation}
This was established in  \cite[Section 5]{Richtarik12a}.  The claim now follows by combining \eqref{eq:jdyugsd87ds}, used with $h=h(x)$, and the following estimate of $\fH(x,h(x))$:
\begin{eqnarray*}
\fH(x,h(x)) &\overset{\eqref{Def_H}}{=}& f(x) + \langle \nabla f(x),h(x) \rangle + \tfrac{1}{2} \|h(x)\|_v^2 + \Psi(x+h(x))
\\
&\overset{\eqref{strongly_convex_1}+\eqref{E_subgradfg}}{\leq}& f(y) + \lng \nabla f(x), x-y\rng - \tfrac{\mu_f }{2}\|y-x\|_v^2+ \langle \nabla f(x),h(x) \rangle + \tfrac{1}{2} \|h(x)\|_v^2
\\
&\phantom{\leq}&  + \;\Psi(y) + \lng -\nabla f(x) + g(x),x+h(x)-y \rng - \tfrac{\mu_\Psi}{2}\|x+h(x)-y\|_v^2\\
&=& F(y) + \lng g(x) ,x-y\rng + \lng g(x),h(x)\rng - \tfrac{\mu_f}{2}\|y-x\|_v^2\\
&\phantom{\leq}& - \tfrac{\mu_\Psi}{2}\|x+h(x)-y\|_v^2 + \;\tfrac{1}{2} \|h(x)\|_v^2\\
&\overset{\eqref{E_CS}}{=}& F(y) + \lng g(x) ,x-y\rng  - \tfrac{\mu_f}{2}\|y-x\|_v^2 - \tfrac{\mu_\Psi}{2}\|x+h(x)-y\|_v^2  - \tfrac{1}{2} (\|g(x)\|_v^*)^2\\
&\overset{\eqref{eq:sjasujdks}}{=}&
F(y)  + \tfrac{1-\mu_f}{2}\|y-x\|_v^2 - \tfrac{\mu_\Psi+1}{2}\|x+h(x)-y\|_v^2\\
&\overset{\eqref{eq:iuweiuhd00}}{=}&
F(y)  + \tfrac{1-\mu_f}{2}\|y-x\|_v^2 - \tfrac{\mu_\Psi+1}{2\alpha}\left(\Exp\left[\|x_+ - y\|_v^2 \right] - (1-\alpha)\|x-y\|_v^2\right).
\end{eqnarray*}

Part (i) follows by  letting $x=y$  and using \eqref{eq:iuweiuhd00} and \eqref{E_CS}. Part (ii) follows as a special case by choosing $\mu_f=\mu_\Psi=0$.
\end{proof}

Property (i) means that  function values $F(x_k)$ of PCDM are monotonically decreasing in expectation when conditioned on the previous iteration.

\subsection{Proof of Theorem~\ref{T_convergence_rate}}

\begin{proof}
Let $x_*$ be an arbitrary optimal solution of \eqref{D_F}. Let $r_k^2  = \|x_k - x_*\|_v^2$,  $g_k = g(x_k)$, $h_k = h(x_k)$ and $F_k = F(x_k)$. Notice that $x_{k+1} = x_{k} + (h_k)_{[S_{k}]}$. By subtracting $F_*$ from both sides of  \eqref{C_trivial}, we get
\begin{equation*}
\Exp\Big[\tfrac{1}{2}r_{k+1}^2 + F_{k+1} - F_*\;|\;x_k\Big] \leq \left(\tfrac{1}{2}r_k^2 + F_k -F_*\right) -\alpha(F_k - F_*),
\end{equation*}
and taking expectations with respect to the whole history of realizations of $S_l, l\leq k$ gives us
\begin{equation*}
\Exp\Big[\tfrac{1}{2}r_{k+1}^2 + F_{k+1} - F_*\Big] \leq \Exp\Big[\tfrac{1}{2}r_k^2 + F_k -F_*\Big] -\alpha \Exp\big[F_k - F_*\Big].
\end{equation*}
Applying this inequality recursively and using the fact that $\Exp[F_j]$ is monotonically decreasing for $j=0,1,\dots,k+1$ \eqref{C_monodecrease}, we obtain
\begin{align*}
\Exp[F_{k+1}-F_*] &\leq  \Exp\Big[\tfrac{1}{2}r_{k+1}^2 + F_{k+1} - F_* \Big] \leq  \tfrac{1}{2}r_{0}^2 + F_0 - F_* - \alpha \sum_{j=0}^k (\Exp[F_j]-F_*)\\
&\leq  \tfrac{1}{2}r_{0}^2 + F_0 - F_* - \alpha (k+1) (\Exp[F_{k+1}]-F_*),
\end{align*}
which leads to \eqref{T_complexityC}.

We now prove \eqref{T_complexitySC} under the strong convexity assumption $\mu_f+\mu_\Psi >0$. From \eqref{eq:sjs8sjs8} we get
\begin{eqnarray}
\Exp\Big[\tfrac{1 + \mu_\Psi}{2}r_{k+1}^2 + F_{k+1} - F_* \;|\; x_k\Big] &\leq& \left(\tfrac{1 + \mu_\Psi}{2}r_k^2 + F_k - F_*  \right)  - \alpha \left(\tfrac{\mu_f + \mu_\Psi}{2}r_k^2 + F_k - F_*  \right).
\label{E_1}
\end{eqnarray}
Notice that for any  $0 \leq \gamma \leq 1$ we have
\begin{eqnarray*}
\tfrac{\mu_f + \mu_\Psi}{2}r_k^2 + F_k - F_*
&=&
\gamma (\tfrac{\mu_f + \mu_\Psi}{2}r_k^2 + F_k - F_*)
+(1-\gamma) (\tfrac{\mu_f + \mu_\Psi}{2}r_k^2 + F_k - F_*)
\\
 &\overset{\eqref{strongly_convex_4}+\eqref{strongly_convex_2}}{\geq}& \gamma\left(\tfrac{\mu_f + \mu_\Psi}{2}r_k^2 + F_k - F_* \right) + (1-\gamma)(\mu_f + \mu_\Psi)r_k^2.
 \end{eqnarray*}
Choosing  \begin{equation}\label{D_gamma}
 \gamma^* \eqdef  \frac{2(\mu_f +\mu_\Psi )}{1 +  \mu_f  + 2\mu_\Psi } \in [0,1]
 \end{equation}
 we obtain
 \begin{eqnarray*}
 \tfrac{\mu_f + \mu_\Psi}{2}r_k^2 + F_k - F_*
&\overset{\eqref{D_gamma}}{\geq}& \gamma^* \left(\tfrac{1 + \mu_\Psi}{2}r_k^2 + F_k - F_*\right).
\end{eqnarray*}
Combining the inequality above with \eqref{E_1} gives
\begin{equation}
\label{E_rkxik_recursion}
\Exp\Big[\tfrac{1 + \mu_\Psi}{2}r_{k+1}^2 + F_{k+1} - F_* \;|\; x_k\Big] \leq(1-\gamma^*\alpha) \left(\tfrac{1 + \mu_\Psi}{2}r_k^2 + F_k -F_*\right).
\end{equation}
It now only remains to take expectation in $x_k$ on both sides of \eqref{E_rkxik_recursion}, and \eqref{T_complexitySC} follows.
\end{proof}


\section{High Probability Convergence Result}
\label{S_iterationcomplexity}

Theorem~\ref{T_convergence_rate}
showed that the Algorithm \ref{PCDM} converges to the optimal solution in expectation.
In this section we derive
iteration complexity bounds for PCDM 
for obtaining an $\epsilon$-optimal solution with high probability.
Let us mentioned that all
existing \cite{Nesterov12,Richtarik12,
Richtarik12a,Lu13}
high-probability results for serial or parallel CDM require a bounded levelset, i.e. they assume that
\begin{equation}
\label{eg:def:levelset}
\mathcal{L}(x_0)
 =\{ x\in \R^N: F(x) \leq F(x_0) \}
\end{equation}
is bounded.
In Section \ref{sec:HPR:Case1}
we present the first high probability result in the case when the levelset can be unbounded (Corollary \ref{thm:FHPR} and Corollary \ref{thm:RR}).
Then in Section \ref{sec:HPR:Case2}
we derive a sharper high-probability result for PCDM of \cite{Richtarik12a} if a bounded levelset is assumed (i.e.
$\mathcal{L}(x_0)$ is bounded).

\subsection{Case 1: Possibly unbounded levelset}
\label{sec:HPR:Case1}

We begin by presenting Lemma \ref{lemma:1overRho}, which will allow us to state \emph{the first high-probability result} (Corollary \ref{thm:FHPR}) for a PCDM applied to a convex function that \emph{does not require} the assumption of a \emph{bounded} levelset.
\begin{lemma}\label{lemma:1overRho}
Let $x_0$ be fixed and $\{x_k\}_{k=0}^\infty$  be a sequence of random vectors in $\R^N$ such that the conditional distribution of
$x_{k+1}$ on $x_k$ is the same as conditional distribution of $x_{k+1}$ on the whole history $\{x_i\}_{i=0}^\infty$ (hence we have Markov sequence).
Let us define $r_k = \phi_r(x_k)$ and $\xi_k = \phi_\xi(x_k)$
where $\phi_r, \phi_\xi: \R^N \to \R$
are non-negative functions.
Further, let us assume that following two inequalities holds for
any $k$ 
  \begin{align}\label{eq:asfoifojwavfwaefsadfa}
  \Exp\left[ \tfrac12r_{k+1} + \xi_{k+1} | x_k \right] &\leq \tfrac12r_k + (1-\zeta) \xi_k,
  \\
  \Exp[\xi_{k+1}]&\leq \xi_k \label{eq:asjdoiwjfwefa}
\end{align}
with some known $\zeta\in(0,1)$.
Then if
\begin{equation}\label{eq:safvjapowjvgowvgfewa}
  K \geq  \frac1\zeta \left(  \frac{\tfrac12r_0 +\xi_0}{\rho \epsilon }-1\right)
\end{equation}
then
$$
\Prob( \xi_K < \epsilon) \geq 1-\rho.
$$
\end{lemma}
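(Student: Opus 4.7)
The plan is to take total expectations of the two hypotheses, chain them into a telescoping estimate for $\mathbb{E}[\xi_K]$, and then invoke Markov's inequality to convert the expectation bound into the required tail bound.

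First I would take total expectation of the conditional inequality \eqref{eq:asfoifojwavfwaefsadfa} to obtain
\begin{equation*}
\mathbb{E}\!\left[\tfrac12 r_{k+1} + \xi_{k+1}\right] \;\leq\; \mathbb{E}\!\left[\tfrac12 r_{k} + \xi_{k}\right] - \zeta\,\mathbb{E}[\xi_k].
\end{equation*}
Summing telescopically from $k=0$ up to $k=K-1$ gives
\begin{equation*}
\mathbb{E}\!\left[\tfrac12 r_K + \xi_K\right] \;\leq\; \tfrac12 r_0 + \xi_0 - \zeta \sum_{j=0}^{K-1}\mathbb{E}[\xi_j].
\end{equation*}

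Next I would exploit the second hypothesis \eqref{eq:asjdoiwjfwefa} (taken in total expectation), which states that $\mathbb{E}[\xi_{k}]$ is non-increasing in $k$. Consequently each term in the sum is at least $\mathbb{E}[\xi_K]$, so $\sum_{j=0}^{K-1}\mathbb{E}[\xi_j]\geq K\,\mathbb{E}[\xi_K]$. Dropping the non-negative term $\tfrac12\mathbb{E}[r_K]$ on the left (using $\phi_r\geq 0$) and rearranging yields
\begin{equation*}
(1+\zeta K)\,\mathbb{E}[\xi_K] \;\leq\; \tfrac12 r_0 + \xi_0,
\qquad\text{hence}\qquad
\mathbb{E}[\xi_K] \;\leq\; \frac{\tfrac12 r_0 + \xi_0}{1+\zeta K}.
\end{equation*}

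Finally, since $\xi_K\geq 0$, Markov's inequality gives
\begin{equation*}
\Prob(\xi_K \geq \epsilon) \;\leq\; \frac{\mathbb{E}[\xi_K]}{\epsilon} \;\leq\; \frac{\tfrac12 r_0 + \xi_0}{\epsilon(1+\zeta K)}.
\end{equation*}
Requiring the right-hand side to be at most $\rho$ and solving for $K$ recovers exactly the threshold \eqref{eq:safvjapowjvgowvgfewa}, which completes the argument. I do not anticipate a real obstacle here; the only points that require a little care are (a) justifying that the monotonicity hypothesis \eqref{eq:asjdoiwjfwefa} survives after taking total expectation (it does, by the tower property applied to the Markov structure), and (b) keeping track of the off-by-one in the summation so that the final constant matches the stated bound.
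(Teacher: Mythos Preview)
Your proposal is correct and follows essentially the same route as the paper's own proof: telescope the expectation of \eqref{eq:asfoifojwavfwaefsadfa}, use the monotonicity \eqref{eq:asjdoiwjfwefa} to replace the sum by $K\,\Exp[\xi_K]$, drop the nonnegative $\tfrac12\Exp[r_K]$, and finish with Markov's inequality. The paper's argument is identical in structure and in the resulting bound $\Exp[\xi_K]\leq(\tfrac12 r_0+\xi_0)/(1+\zeta K)$.
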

\begin{proof}
Using \eqref{eq:asfoifojwavfwaefsadfa}
we have
$$
\Exp[\xi_k] \leq \Exp\left[\tfrac12r_k +\xi_k\right]
\leq \tfrac12r_0 +\xi_0 - \zeta \sum_{j=0}^{k-1} \Exp[\xi_j]
\overset{\eqref{eq:asjdoiwjfwefa}}{\leq}
\tfrac12r_0 +\xi_0 - k\zeta \Exp[\xi_k].
$$
Hence
\begin{equation}\label{eq:asjvvvvewvfw}
\Exp[\xi_k] \leq \frac{\tfrac12r_0 +\xi_0}{1 + k\zeta}.
\end{equation}
Now, from the Markov inequality we have
\begin{align*}
\Prob( \xi_K \geq \epsilon )  &\leq \frac{\Exp[\xi_K]}{\epsilon}
\overset{
\eqref{eq:asjvvvvewvfw}}{\leq}
\frac{1}{\epsilon} \frac{\tfrac12r_0 +\xi_0}{1
 + K\zeta}
\overset{\eqref{eq:safvjapowjvgowvgfewa}}{\leq} \rho.   \qedhere
\end{align*}
\end{proof}
Naturally, the result $\mathcal{O}(\frac{1}{\epsilon \rho})$ is very pessimistic
and hence one may be concerned about tightness of the lemma.
The following example, indeed, shows that Lemma \ref{lemma:1overRho} is tight, i.e. the bound on $K$ cannot be improved much. (We construct an example that, under the assumptions \eqref{eq:asfoifojwavfwaefsadfa} and \eqref{eq:asjdoiwjfwefa} (i.e., using the analysis of \cite{Lu13}), requires $\mathcal{O}(\frac1{\epsilon\rho})$ iterations.)
\begin{example}[Tightness of Lemma \ref{lemma:1overRho}]\label{Ex_TightUnbounded}
Let us fix some small value of $\rho \in (0,1)$
and assume that $(r_1,\xi_1)$ have following distribution:
\begin{equation*}
 (r_1,\xi_1)
 =
 \begin{cases}
  (0, 0),&\mbox{with probability}\ 1-\rho\\
  (2\vartheta, \epsilon),&\mbox{otherwise},
 \end{cases}
\end{equation*}
where $\vartheta$ is chosen in such a way that \eqref{eq:asfoifojwavfwaefsadfa}
is satisfied. Then, we can chose it as follows
\begin{equation*}
   \rho   (\vartheta +\epsilon ) = \frac12 r_0+(1-\zeta) \xi_0
   \quad
   \Rightarrow
   \quad
   \vartheta = \frac{\frac12 r_0+(1-\zeta) \xi_0}{\rho} - \epsilon.
\end{equation*}
Now we define, for $k=1,2,3, \dots$
\begin{equation*}
 (r_{k+1},\xi_{k+1})
 =
 \begin{cases}
  (r_k - 2 \zeta \epsilon  , \epsilon ),&\mbox{if}\ r_k \geq  2 \zeta \epsilon\\
  (0, 0),&\mbox{otherwise}.
 \end{cases}
 \end{equation*}
 Now it is easy to verify that for
 $$
 K \eqdef \left\lfloor\frac{  \vartheta}{\zeta \epsilon}
 \right\rfloor
  =\frac1{\zeta} \left\lfloor    \frac{\tfrac12 r_0+(1-\zeta) \xi_0}{\rho  \epsilon }
  -1 \right\rfloor
 $$
 we have that $\Prob( \xi_{K} \geq \epsilon) \geq \rho$.
\end{example}

\begin{corollary}[High probability result without bounded levelset]
\label{thm:FHPR}
If we use Lemma \ref{lemma:1overRho} with  $\tfrac12r_k =\phi_r(x_k) = \frac12 \|x_k-x_*\|^2_v$, $\xi_k=\phi_\xi(x_k)=F(x_k)-F_*$  and  $\zeta=\alpha=\frac{\Exp|\hatS|}{n}$ then
we obtain that
$$\forall K \geq \frac{n}{\Exp|\hatS|} \left(  \frac{\frac12 \|x_0-x_*\|_v^2 +F(x_0)-F_*}{\rho \epsilon }-1\right)$$ we have
$\Prob(F(x_K)-F_* <\epsilon) \geq 1-\rho$.
\end{corollary}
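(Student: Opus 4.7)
The plan is to verify the three hypotheses of Lemma \ref{lemma:1overRho} with the stated substitutions and then read off the conclusion. First I would argue the Markov property for the sequence $\{x_k\}_{k\geq 0}$ produced by Algorithm \ref{PCDM}: at each step $x_{k+1} = x_k + \sum_{i\in S_k} U_i (h(x_k))^{(i)}$, where $S_k$ is drawn from $\hat S$ independently of $\{x_i\}_{i<k}$ conditional on $x_k$, so the conditional law of $x_{k+1}$ given the full history coincides with that given $x_k$ alone.

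Next I would identify $\phi_r(x) = \|x - x_*\|_v^2$ and $\phi_\xi(x) = F(x) - F_*$ (both non-negative, the second by optimality of $x_*$) and check the two recursive inequalities. Inequality \eqref{eq:asfoifojwavfwaefsadfa} is obtained by specializing Lemma \ref{L_3}(ii), equation \eqref{C_trivial}, to $y = x_*$. Since $F(x_*) = F_*$, this yields
\begin{equation*}
\Exp\!\left[\tfrac{1}{2}\|x_{k+1} - x_*\|_v^2 + (F(x_{k+1}) - F_*)\,\big|\,x_k\right] \leq \tfrac{1}{2}\|x_k - x_*\|_v^2 + (1-\alpha)(F(x_k) - F_*),
\end{equation*}
which is precisely \eqref{eq:asfoifojwavfwaefsadfa} with $\zeta = \alpha$. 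Inequality \eqref{eq:asjdoiwjfwefa} follows from the expected monotonicity statement Lemma \ref{L_3}(i), equation \eqref{C_monodecrease}: it gives $\Exp[F(x_{k+1})\,|\,x_k] \leq F(x_k)$, hence $\Exp[\xi_{k+1}\,|\,x_k] \leq \xi_k$, and taking unconditional expectation yields $\Exp[\xi_{k+1}] \leq \Exp[\xi_k] \leq \xi_k$ in the sense required by the lemma's proof (which only uses $\Exp[\xi_{j}] \geq \Exp[\xi_k]$ for $j \leq k$).

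With both hypotheses in hand, I would invoke Lemma \ref{lemma:1overRho} directly. Substituting $\zeta = \alpha = \Exp[|\hat S|]/n$ into the bound \eqref{eq:safvjapowjvgowvgfewa} gives $1/\zeta = n/\Exp[|\hat S|]$, and replacing $\tfrac{1}{2}r_0$ and $\xi_0$ with their concrete values $\tfrac{1}{2}\|x_0 - x_*\|_v^2$ and $F(x_0) - F_*$ produces the advertised iteration count $K \geq \frac{n}{\Exp[|\hat S|]}\bigl(\frac{\frac{1}{2}\|x_0 - x_*\|_v^2 + F(x_0) - F_*}{\rho\epsilon} - 1\bigr)$, after which $\Prob(F(x_K) - F_* < \epsilon) \geq 1 - \rho$.

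There is no real obstacle here; the corollary is a pure instantiation of Lemma \ref{lemma:1overRho}, with all analytic work already carried out in Lemma \ref{L_3}. The only minor care needed is in confirming that the randomness driving Algorithm \ref{PCDM} is i.i.d.\ across iterations so that the Markov hypothesis of Lemma \ref{lemma:1overRho} applies verbatim.
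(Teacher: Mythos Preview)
Your proposal is correct and is exactly the intended argument: the paper offers no separate proof for this corollary beyond the statement itself, and the verification of hypotheses \eqref{eq:asfoifojwavfwaefsadfa} and \eqref{eq:asjdoiwjfwefa} via Lemma~\ref{L_3}(ii) (with $y=x_*$) and Lemma~\ref{L_3}(i), respectively, is precisely what is implicit. Your remark that the proof of Lemma~\ref{lemma:1overRho} only uses monotonicity of $\Exp[\xi_k]$, so the conditional inequality $\Exp[\xi_{k+1}\mid x_k]\le \xi_k$ suffices, is also the right reading of the slightly loose statement \eqref{eq:asjdoiwjfwefa}.
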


The negative aspect of Corollary \ref{thm:FHPR} is the fact that one needs
$\mathcal{O}(\frac1{\rho})$ iterations, whereas classical results under the bounded levelset assumption require only $\mathcal{O}(\log\frac1\rho)$ iterations.

\paragraph{Multiple run strategy.}
Now we present a restarting strategy
\cite{Richtarik12} trick which will give us high probability result
$\mathcal{O}(\log\frac1\rho)$.
\begin{lemma}
Let
$\{x_k\}_{k=0}^\infty$,
$\{r_k\}_{k=0}^\infty$ and $\{\xi_k\}_{k=0}^\infty$
be the same as in Lemma
\ref{lemma:1overRho}.
Assume that we
observe $r = \lceil \log \frac 1{\rho}
\rceil$ different random and independent realisations
of this sequence always starting from $x_0$, i.e. for any $k$ we have
observed $x_k^1, x_k^2, \dots, x_k^r$.
Then if
$$
 K \geq
 \frac1\zeta \left(   \frac{\frac12 r_0 +\xi_0}{ \epsilon (1/e) }-1\right)
$$
then
$$
\Prob\left(
   \min_{l \in \{1,2,\dots, r\}}
   \xi_K^l < \epsilon\right)
   \geq 1-\rho.
$$
\end{lemma}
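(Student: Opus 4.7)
The plan is to apply Lemma \ref{lemma:1overRho} separately to each of the $r$ independent trajectories, but with the target failure probability replaced by $1/e$ rather than $\rho$, and then combine the independent failure events via their product.

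First, I would instantiate Lemma \ref{lemma:1overRho} on a single trajectory $\{x_k^l\}_{k \geq 0}$ with confidence parameter $\tilde{\rho} = 1/e$. Since $K \geq \frac{1}{\zeta}\bigl(\frac{\tfrac12 r_0+\xi_0}{(1/e)\epsilon}-1\bigr)$ exactly matches the hypothesis \eqref{eq:safvjapowjvgowvgfewa} with $\rho$ replaced by $1/e$, the lemma yields
\begin{equation*}
\Prob(\xi_K^l \geq \epsilon) \;\leq\; \frac{1}{e}, \qquad l = 1,2,\dots,r.
\end{equation*}

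Next, I would use independence of the $r$ runs (each restarted from the same $x_0$ with fresh randomness in the samplings) to multiply these probabilities:
\begin{equation*}
\Prob\!\left(\min_{l \in \{1,\dots,r\}} \xi_K^l \geq \epsilon\right)
= \Prob\!\left(\bigcap_{l=1}^r \{\xi_K^l \geq \epsilon\}\right)
= \prod_{l=1}^r \Prob(\xi_K^l \geq \epsilon)
\leq \left(\frac{1}{e}\right)^r = e^{-r}.
\end{equation*}
Finally, from $r = \lceil \log(1/\rho) \rceil \geq \log(1/\rho)$ we deduce $e^{-r} \leq \rho$, so that
\begin{equation*}
\Prob\!\left(\min_{l \in \{1,\dots,r\}} \xi_K^l < \epsilon\right) \;\geq\; 1 - \rho,
\end{equation*}
which is the claim.

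There is essentially no technical obstacle here; the result is a standard boosting/confidence-amplification argument on top of Lemma \ref{lemma:1overRho}. The only subtlety worth noting is the choice $\tilde{\rho}=1/e$, which is what makes the exponent-versus-iteration trade-off balance: each extra independent run costs a factor $\frac{1}{\zeta}\cdot \frac{\tfrac12 r_0+\xi_0}{\epsilon}(e-1)$ iterations but buys a factor-$e$ reduction in failure probability, turning the linear $1/\rho$ dependence of Corollary \ref{thm:FHPR} into the desired $\log(1/\rho)$ dependence.
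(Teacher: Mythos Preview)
Your proposal is correct and follows exactly the same approach as the paper: apply Lemma~\ref{lemma:1overRho} to each independent run with confidence parameter $1/e$, multiply the failure probabilities using independence, and then use $r=\lceil\log(1/\rho)\rceil$ to bound $(1/e)^r\le\rho$. The only addition in your write-up is the explanatory remark on why $1/e$ is the natural choice, which is not in the paper but is harmless commentary.
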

\begin{proof}
Because the realisation are independent
then for
any $l \in \{1,2,\dots,r\}$
we have
from Lemma~\ref{lemma:1overRho}
that
$\Prob( \xi_K^l \geq \epsilon) \leq \frac 1e$.
Hence
\begin{align*}
\Prob\left(
   \min_{l \in \{1,2,\dots, r\}}
   \xi_K^l \geq \epsilon\right)
 &=
\Prob\left(
   \xi_K^1 \geq \epsilon
   ,
   \xi_K^2 \geq \epsilon,
   \dots,
   \xi_K^r \geq \epsilon
   \right)
 =
    \prod_{l \in \{1,2,\dots, r\}}
\Prob\left(
   \xi_K^l \geq \epsilon\right)
   \leq
   \left(\frac1e\right)^r
   \leq \rho. \qedhere
\end{align*}
\end{proof}

\begin{corollary}
\label{thm:RR}
If we run PCDM $r=\lceil \log \frac 1{\rho}
\rceil$ many times for $K\geq \frac{n}{\Exp[|\hatS|]} \left(   \frac{\frac12\|x_0-x_*\|_v^2 +F(x_0)-F_*}{ \epsilon (1/e) }-1\right)$
each, then
the best solution we get, indexed $l\in \{1,2,\dots,r\}$,  satisfies
$\Prob( F(x_K^l)-F_* < \epsilon)\geq 1-\rho$.
Hence, in total we need
$\left\lceil
\frac{n}{\Exp[|\hatS|]} \left(   \frac{\frac12\|x_0-x_*\|_v^2 +F(x_0)-F_*}{ \epsilon (1/e) }-1\right)
\right\rceil
\lceil \log \frac1\rho \rceil
\sim \mathcal{O}\left(\log \frac1\rho\right)
$
iterations of PCDM.
\end{corollary}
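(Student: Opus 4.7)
The proof is essentially a direct instantiation of the preceding multiple-run lemma with the same specialization used in Corollary \ref{thm:FHPR}, so the plan is mostly bookkeeping. I would set $\phi_r(x) = \|x - x_*\|_v^2$ (so $\tfrac12 r_k = \tfrac12\|x_k-x_*\|_v^2$), $\phi_\xi(x) = F(x) - F_*$ (so $\xi_k = F(x_k) - F_*$), and $\zeta = \alpha = \frac{\Exp|\hatS|}{n}$, all of which are non-negative as required. The Markov property for $\{x_k\}$ is immediate from the definition of PCDM, since $x_{k+1}$ is obtained from $x_k$ by the independent draw of $S_k$.

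Next I would verify the two structural hypotheses of the preceding lemma. Condition \eqref{eq:asfoifojwavfwaefsadfa} follows from Lemma \ref{L_3}(ii) applied at $y = x_*$ after taking conditional expectation on $x_k$; this produces exactly
\[
\Exp\!\left[\tfrac12 r_{k+1} + \xi_{k+1}\,\big|\, x_k\right] \leq \tfrac12 r_k + (1-\alpha)\xi_k.
\]
Condition \eqref{eq:asjdoiwjfwefa} follows from the monotonicity-in-expectation property Lemma \ref{L_3}(i), which gives $\Exp[F(x_{k+1})\mid x_k] \leq F(x_k)$ and hence $\Exp[\xi_{k+1}] \leq \Exp[\xi_k] \leq \xi_k$ upon iterating the tower rule (the last $\leq \xi_k$ needs care; more precisely one uses that $\Exp[\xi_{k+1}\mid x_k]\leq \xi_k$, which is enough to run the argument in the proof of the previous lemma).

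Then I would invoke the multiple-run lemma directly. Since each of the $r = \lceil \log(1/\rho)\rceil$ trajectories $\{x_k^l\}_{k\geq 0}$, $l = 1,\dots,r$, starts from the common point $x_0$ and is generated using independent samplings, they constitute $r$ independent copies of a Markov chain satisfying \eqref{eq:asfoifojwavfwaefsadfa}--\eqref{eq:asjdoiwjfwefa} with the above identifications. Substituting $\zeta = \alpha = \Exp|\hatS|/n$ into the iteration bound of the multi-run lemma produces precisely the formula for $K$ in the statement, and the conclusion $\Prob(\min_l \xi_K^l < \epsilon) \geq 1-\rho$ translates into $\Prob(F(x_K^{l^*}) - F_* < \epsilon) \geq 1-\rho$ where $l^*$ achieves the minimum. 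The total-iteration claim follows by multiplying $K$ by $r$ and noting the $\mathcal{O}(\log(1/\rho))$ scaling.

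There is no real obstacle here: the whole result is essentially a plug-in application of the previous two lemmas. The one mild subtlety worth flagging is that choosing the ``best'' run requires evaluating $F$ at the $r$ terminal iterates, which the algorithm itself never does; however, this is a one-shot post-processing step outside the main loop and does not affect the stated iteration complexity.
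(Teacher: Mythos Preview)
Your proposal is correct and matches the paper's approach: the corollary is stated there without proof, as an immediate instantiation of the multiple-run lemma with the same identifications used in Corollary~\ref{thm:FHPR}. Your verification of \eqref{eq:asfoifojwavfwaefsadfa} via Lemma~\ref{L_3}(ii) and of \eqref{eq:asjdoiwjfwefa} via Lemma~\ref{L_3}(i), together with your remark that the conditional form $\Exp[\xi_{k+1}\mid x_k]\le \xi_k$ is what the argument actually uses, is exactly the intended reasoning; the closing observation about needing to evaluate $F$ once per run to select the best terminal iterate is a valid practical point that the paper does not mention.
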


\subsection{Case 2: Bounded levelset}
\label{sec:HPR:Case2}

The next result, Theorem \ref{T_Complexity}, obtains the rate $\mathcal{O}(\log\frac1\rho)$, under the assumption that the levelset is bounded.
However, some results will hold only for a modified version of Algorithm \ref{PCDM}. In particular, we now present Algorithm \ref{PCDM-Monotinic}.
\begin{algorithm}[H]
\caption{PCDM-M: Parallel Coordinate Descent Method \cite{Richtarik12a} }\label{PCDM-Monotinic}
\begin{algorithmic}[1]

\State choose initial point $x_0 \in\R^N$
\For{$k = 0,1,2,\dots$}
\State randomly choose set of blocks $S_k \subseteq \{1,\dots,n\}$
\label{alg:line:sampling}
  \For{$i \in S_k$ ({\bf in parallel})}
  \State  compute
$\vc{h(x_k)}{i} = \arg \min_{t \in \R^{N_i}} \Big\{\langle \vc{(\nabla f(x_k))}{i},t \rangle + \frac{  v_i}{2} \|t\|_{(i)}^2 + \Psi_i(\vc{x_k}{i}+t)\Big\}
$
\label{alg:line:stepsize}
  \EndFor

\If{$F(x_k +  \sum_{i\in S_k} U_i \vc{h(x_k)}{i} ) \leq F(x_0)$}
\State apply the update: $x_{k+1} \gets x_k + \sum_{i\in S_k} U_i \vc{h(x_k)}{i}$
\Else
  \State set  $x_{k+1} \gets x_k$
\EndIf

\EndFor

 \end{algorithmic}
\end{algorithm}
Notice that the first 6 steps of Algorithm~\ref{PCDM-Monotinic} are exactly the same as those of Algorithm~\ref{PCDM}. However, Algorithm \ref{PCDM-Monotinic} forces the iterates to stay in $\mathcal{L}(x_0)$ (steps 7--11).

\paragraph{Distance to the optimal solution set.}
In some of the results derived in this Section we need the distance to the optimal solution set, inside the levelset, to be finite, i.e.
\begin{equation}
\label{D_barR}
\mathcal{R}_{v,0}\eqdef \max_{x\in\mathcal{L}(x_0)} \Big\{\max_{x_* \in X^*}\|x-x_*\|_v\Big\} < \infty.
\end{equation}
Note that for any $x_* \in X^*$ (where $X^*$ is a set of optimal solutions) it
trivially holds that $\|x_0 - x_*\|_v \leq \mathcal{R}_{v,0}$. Moreover, for some problems the levelset can be unbounded, in which case $\mathcal{R}_{v,0}$ is infinite, whereas if $X^* \neq \emptyset$ then $\|x_0 - x_*\|$ is \emph{always finite}.

\begin{theorem}
\label{T_Complexity}
Let $\{x_k\}_{k \geq 0}$ be a sequence of iterates generated by
\begin{itemize}
 \item PCDM (Algorithm \ref{PCDM}), if $F$ is strongly convex with $\mu_f(w) + \mu_\Psi(w) >0$ or $F$ is convex and a monotonic ESO is used,
 \item PCDM-M (Algorithm \ref{PCDM-Monotinic}), if $F$ is convex and a non-monotonic ESO is used.
\end{itemize}
Let $0 < \epsilon < F(x_0) - F_*$ and $\rho \in (0,1)$ be chosen arbitrarily. Define $\alpha = \frac{\Exp[|\hatS|]}{n}$, and let
\begin{eqnarray}
\label{D_c}
c &\eqdef& \max\{  \mathcal{R}_{v,0}^2,F(x_0)-F_*\}.
\end{eqnarray}
 Then
\begin{itemize}
\item[(i)] if $F$ is convex and we choose
\begin{equation}
\label{D_Kconvex}
K \geq \frac{2 c}{\alpha \epsilon}\left(1 + \log\left(\frac{\frac{1}{2} \|x_0-x_*\|_v^2 + F(x_0) - F_*}{2c\rho} \right) \right) + 2 - \frac{1}{\alpha},
\end{equation}
\item[(ii)] or if $F$ is strongly convex with $\mu_f  + \mu_\Psi  >0$ and we choose
\begin{equation}
\label{D_Kstronglyconvex}
K \geq \frac{1 + \mu_f  + 2\mu_\Psi }{2\alpha(\mu_f +\mu_\Psi )}\log\left(\frac{\frac{1+\mu_\Psi }{2}\|x_0-x_*\|_v^2+F(x_0)-F_*}{\epsilon \rho}\right)
\end{equation}
\end{itemize}
then
\begin{equation}
\label{E_highprob}
\mathbf{P}(F(x_K) - F_* < \epsilon) \geq 1-\rho.
\end{equation}
\end{theorem}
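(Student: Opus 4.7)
Part~(ii), the strongly convex case, reduces to a direct application of Markov's inequality to the expectation bound \eqref{T_complexitySC}. Writing $B := \tfrac{1+\mu_\Psi}{2}\|x_0-x_*\|_v^2 + (F(x_0) - F_*)$ and $\gamma^* := \tfrac{2(\mu_f+\mu_\Psi)}{1+\mu_f+2\mu_\Psi}$, Theorem~\ref{T_convergence_rate}(ii) gives $\Exp[F(x_K)-F_*] \leq (1-\alpha\gamma^*)^K B$, so $\Prob(F(x_K)-F_* \geq \epsilon) \leq (1-\alpha\gamma^*)^K B/\epsilon$. Requiring this to be at most $\rho$, using $\log(1-x) \leq -x$, and solving for $K$ recovers exactly \eqref{D_Kstronglyconvex}.

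Part~(i), the convex case, is the more delicate one, because applying Markov directly to \eqref{T_complexityC} would give $\mathcal{O}(1/\rho)$ rather than the claimed $\mathcal{O}(\log(1/\rho))$. The plan is to run a stopping-time argument in the spirit of Richt\'arik--Tak\'a\v{c}~\cite{Richtarik12}. Let $\xi_k := F(x_k) - F_*$ and define the potential $\Phi_k := \tfrac12\|x_k-x_*\|_v^2 + \xi_k$. Lemma~\ref{L_3}(ii) with $y=x_*$ yields $\Exp[\Phi_{k+1}\mid x_k] \leq \Phi_k - \alpha\xi_k$. Under the hypotheses of the theorem the iterates stay in $\mathcal{L}(x_0)$ (monotonic ESO for Algorithm~\ref{PCDM}, rejection step for Algorithm~\ref{PCDM-Monotinic}), so $\xi_k \leq c$ and $\|x_k-x_*\|_v^2 \leq \mathcal{R}_{v,0}^2 \leq c$ by \eqref{D_c} and \eqref{D_barR}, and hence $\Phi_k \leq \tfrac{3c}{2} \leq 2c$ for every $k$.

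Introduce the hitting time $\tau := \min\{k \geq 0 : \xi_k \leq \epsilon\}$. On $\{\tau > k\}$ we have $\xi_k > \epsilon$ and $\Phi_k \leq 2c$, so $\alpha\xi_k \geq \tfrac{\alpha\epsilon}{2c}\Phi_k$; combined with the recursion above and $\mathbf{1}_{\{\tau > k+1\}} \leq \mathbf{1}_{\{\tau > k\}}$ this yields $\Exp\!\bigl[\Phi_{k+1}\mathbf{1}_{\{\tau > k+1\}}\bigr] \leq \bigl(1-\tfrac{\alpha\epsilon}{2c}\bigr)\Exp\!\bigl[\Phi_k\mathbf{1}_{\{\tau > k\}}\bigr]$. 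Iterating gives $\Exp[\Phi_K\mathbf{1}_{\{\tau > K\}}] \leq (1-\tfrac{\alpha\epsilon}{2c})^K \Phi_0$; since $\Phi_K \geq \xi_K > \epsilon$ on $\{\tau > K\}$, Markov then bounds $\Prob(\tau > K) \leq (1-\tfrac{\alpha\epsilon}{2c})^K \Phi_0/\epsilon$. Imposing this at most $\rho$, using $\log(1-x) \leq -x$, and separating the ``burn-in'' term $\tfrac{2c}{\alpha\epsilon}$ from the logarithmic tail reproduces the precise form \eqref{D_Kconvex}, including the additive constants $2 - 1/\alpha$.

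The final step is to convert $\Prob(\tau \leq K) \geq 1-\rho$ into the desired $\Prob(\xi_K < \epsilon) \geq 1-\rho$. Under a monotonic ESO with Algorithm~\ref{PCDM}, $\xi_k$ is nonincreasing, so $\{\tau \leq K\} \subseteq \{\xi_K \leq \epsilon\}$ and we are done. For Algorithm~\ref{PCDM-Monotinic} the iterates lie in $\mathcal{L}(x_0)$ but $F$ may oscillate, so hit-and-stay is not automatic; this is the main obstacle. The plan is to exploit the supermartingale property $\Exp[\xi_{k+1}\mid x_k] \leq \xi_k$ from Lemma~\ref{L_3}(i), either via optional stopping applied to $\{\xi_{k\wedge\tau}\}$, or by splitting $K$ into two phases --- running the stopping-time bound on the first phase at a slightly stricter threshold $\epsilon'<\epsilon$, then applying Markov on the supermartingale during the second phase --- and tuning constants so that the total failure probability remains at most $\rho$.
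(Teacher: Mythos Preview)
Your treatment of part~(ii) matches the paper's exactly: Markov's inequality applied to \eqref{T_complexitySC}.

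For part~(i), however, your route diverges from the paper's and does \emph{not} recover the stated bound. Your single-phase contraction on $\Phi_k\mathbf{1}_{\{\tau>k\}}$ yields
\[
\Prob(\tau>K)\;\le\;\frac{\Phi_0}{\epsilon}\Bigl(1-\tfrac{\alpha\epsilon}{2c}\Bigr)^{K},
\qquad\text{hence}\qquad
K\;\gtrsim\;\frac{2c}{\alpha\epsilon}\,\log\frac{\Phi_0}{\epsilon\rho},
\]
which carries an extra $\log(1/\epsilon)$ compared with \eqref{D_Kconvex}, where the logarithm contains $\Phi_0/(2c\rho)$ rather than $\Phi_0/(\epsilon\rho)$. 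No amount of ``separating the burn-in term'' removes this; the constants $2-1/\alpha$ in \eqref{D_Kconvex} come from ceiling operations in a different construction, not from your estimate.

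The paper instead runs a genuine \emph{two-phase} argument on the thresholded variable $\xi_k^\epsilon$ (equal to $\xi_k$ if $\xi_k\ge\epsilon$, else $0$). Phase~1 uses the sublinear rate \eqref{T_complexityC} for $K_1\approx\frac{1}{\alpha}\bigl(\frac{\Phi_0}{t\epsilon}-1\bigr)$ iterations to reach $\Exp[\xi_{K_1}^\epsilon]\le t\epsilon$. Phase~2 uses the bounded-levelset recursion $\Exp[\xi_{k+1}\mid x_k]\le(1-\tfrac{\alpha\xi_k}{2c})\xi_k$ (this is \cite[Lemma~17]{Richtarik12a}, \emph{not} Lemma~\ref{L_3}), which on $\xi_k^\epsilon$ becomes the geometric decay $\Exp[\xi_{k+1}^\epsilon]\le(1-\tfrac{\alpha\epsilon}{2c})\Exp[\xi_k^\epsilon]$, for $K_2\approx\frac{2c}{\alpha\epsilon}\log(t/\rho)$ further iterations. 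Minimising $K_1+K_2$ over $t>0$ gives $t_*=\Phi_0/(2c)$ and exactly \eqref{D_Kconvex}. It is this optimisation over the phase boundary that trades the $\log(1/\epsilon)$ in your bound for the constant $1$ inside the bracket.

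On the hit-and-stay issue you raise: the paper avoids stopping times altogether by working with $\xi_k^\epsilon$, so the relevant question becomes whether $\xi_k<\epsilon$ implies $\xi_{k+1}^\epsilon=0$; for the monotonic-ESO case this is immediate, and for PCDM-M the paper defers to the argument in \cite[Theorem~1]{Richtarik12}. Your proposed optional-stopping or two-stage fix for PCDM-M is therefore addressing a difficulty that the paper's $\xi_k^\epsilon$ device is designed to circumvent.
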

\begin{proof}
The proof proceeds as in \cite[Theorem~1]{Richtarik12}. For convenience, let $\xi_k\eqdef F(x_k)-F_*$ and define
\begin{equation*}
\xi_k^\epsilon =
\begin{cases}
\xi_k, & \text{if } \xi_k \geq \epsilon,\\
0, & \text{otherwise.}
\end{cases}
\end{equation*}
Notice that $\xi_k^\epsilon < \epsilon \Leftrightarrow \xi_k < \epsilon, k \geq 0$. Using the Markov inequality,
\begin{equation}
\label{E_Markov}
\Prob(F(x_k) - F_* \geq \epsilon) = \Prob(\xi_k \geq \epsilon) = \Prob(\xi_k^\epsilon \geq  \epsilon) \leq \tfrac{1}{\epsilon}\Exp[\xi_{k}^\epsilon],
\end{equation}
so it suffices to find $K$ such that
\begin{equation}
\label{D_highprob}
  \Exp[\xi_{K}^\epsilon] \leq \epsilon \rho.
\end{equation}
Using an ESO and Lemma 17 in \cite{Richtarik12a}
will give us
\begin{equation}
\label{eq:asdfjoiwajfcaw}
\Exp[\xi_{k+1} | x_k]
\leq \left(1-
\frac{\alpha \xi_k}{2c} \right) \xi_k.
\end{equation}
It is easy to verify that
\eqref{eq:asdfjoiwajfcaw}
and the definition of $\xi_k^\epsilon$
lead to (see the proof of \cite[Theorem~1]{Richtarik12})
\begin{equation*}
\Exp[\xi_{k+1}^\epsilon|x_k] \leq \left(1 - \frac{\alpha \epsilon }{2 c} \right)\xi_{k}^\epsilon, \qquad \forall k \geq 0.
\end{equation*}
Taking expectation with respect to $x_{k}$ on both sides of the above we get
\begin{equation}
\label{E_exp}
\Exp[\xi_{k+1}^\epsilon] \leq \left(1 - \frac{\alpha \epsilon }{2 c} \right)\Exp[\xi_{k}^\epsilon], \qquad \forall k \geq 0.
\end{equation}
In addition, using \eqref{T_complexityC} and the relation $\xi_{k}^\epsilon\leq \xi_{k}$, we have
\begin{equation}
\label{E_xike}
\Exp[\xi_{k}^\epsilon] \leq\frac{1}{1+\alpha k} \left(\frac{1}{2}\|x_0-x_*\|_v^2 + \xi_0 \right), \qquad \forall k \geq 0.
\end{equation}
Now for any $t>0$, let
\begin{equation}
K_1 = \left\lceil \frac{1}{\alpha}\left(\frac{1}{ t \epsilon} \left( \frac{1}{2} \|x_0-x_*\|_v^2 + \xi_k \right) -1\right) \right\rceil, \qquad K_2 = \left\lceil \frac{2 c}{\alpha\epsilon}\log\left(\frac{t}{\rho}\right)\right\rceil.
\end{equation}
It follows from \eqref{E_xike} that $\Exp[\xi_{K_1}^\epsilon] \leq t \epsilon$, which together with \eqref{E_exp} implies that
\begin{equation}
\Exp[\xi_{K_1+K_2}^\epsilon|x_{K_1}] \leq \left(1 - \frac{\alpha \epsilon}{2 c}\right)^{K_2} \Exp[\xi_{K_1}^\epsilon ] \leq \left(1 - \frac{\alpha \epsilon}{2 c}\right)^{K_2}t\epsilon \leq \rho\epsilon.
\end{equation}
Notice that, by \eqref{E_exp}, the sequence $\Exp[\xi_{k}^\epsilon]$ is decreasing. Hence, we have
\begin{equation}
\Exp[\xi_{k}^\epsilon ] \leq \rho \epsilon, \qquad \forall k \geq K(t),
\end{equation}
where
\begin{equation}
K(t) \eqdef \frac{1}{\alpha}\left(\frac{1}{t \epsilon}\left(\frac{1}{2}\|x_0-x_*\|_v^2 + F(x_k) - F_* \right) -1\right) + \frac{2 c}{\alpha\epsilon}\log\left(\frac{t}{\rho}\right) + 2.
\end{equation}
It is easy to verify that
\begin{equation}
t_* (= \arg \min_{t>0} K(t) ) \eqdef \frac{1}{2c} \left(\frac{1}{2}\|x_0-x_*\|_v^2 + F(x_{0}) - F_* \right),
\end{equation}
Because $K \geq K(t_*)$, we see that \eqref{D_highprob} holds and the proof of (i) is complete.

Now we prove (ii). For convenience, set $\mu_\Psi \equiv \mu_\Psi(w)$. Then from \eqref{T_complexitySC}, we have
\begin{equation}
 \Exp[\xi_{k+1}|x_k] \leq \left(1 - \alpha\gamma \right) \left(\frac{1+\mu_\Psi}{2}\|x_0-x_*\|_v^2 + \xi_k \right),
\end{equation}
where $0 <\gamma \leq 1$ is defined in \eqref{D_gamma}. Taking expectation in $x_k$ (and using recursion) gives $\Exp[\xi_{k+1}] \leq \left(1 - \alpha\gamma \right)^k \left(\frac{1+\mu_\Psi}{2}\|x_0-x_*\|_v^2 + \xi_0 \right)$. Finally, using the Markov inequality \eqref{E_Markov}, and $K$ given in \eqref{D_Kstronglyconvex}, we have
\begin{equation}
  \Prob(\xi_K \geq \epsilon) \leq \frac{1}{\epsilon}\Exp[\xi_K]\leq \frac{1}{\epsilon}(1 - \alpha\gamma)^K \left(\frac{1+\mu_\Psi}{2}\|x_0-x_*\|_v^2 + \xi_0 \right)\leq \rho,
\end{equation}
and the result follows.
\end{proof}

In this Section we have presented three new convergence results for PCDM. The first result shows that, using the analysis in \cite{Lu13}, PCDM obtains a $O(\frac1\rho)$ rate when the levelset is unbounded for a single run strategy. The second result shows that PCDM obtains a $O(\log \frac1\rho)$ rate for a restarting strategy.

On the other hand, if the levelset is bounded, we have shown that PCDM achieves a rate of $O(\log \frac1\rho)$. It is still an open problem to determine whether PCDM can achieve a rate of $O(\log \frac1\rho)$ for a single run strategy when the levelset is unbounded.

\section{Discussion}
\label{sec:discusion}

\subsection{Comparison of the convergence rate results}
\label{S_comparisonrate}
We have the following remarks on comparing the results in Theorem \ref{T_convergence_rate} with those in \cite{Richtarik12a}.

\subsubsection{Comparison in the convex case}

For problem \eqref{D_F}, an expected-value type of convergence rate is not presented explicitly in \cite{Richtarik12}, although it can be derived from the following relation (that is stated in \cite{Richtarik12a} and proved in \cite[Theorem~1]{Richtarik12}):
\begin{equation}\label{exp1}
\E[F(x_{k+1})-F^*|x_k] \leq (F(x_k)-F^*) -  \alpha\frac{(F(x_k)-F^*)^2}{2c}, \qquad \forall k \geq 0,
\end{equation}
where $c$ is defined in \eqref{D_c}. Taking expectation on both sides of \eqref{exp1} and using a similar argument as that in \cite{Nesterov12}, gives
\begin{equation}\label{exp2}
\E[F(x_k)-F^*|x_{k-1}] \leq \frac{2c(F(x_0)-F^*)}{2c + \alpha k (F(x_0)-F^*)}, \qquad \forall k \geq 0.
\end{equation}
Let $a$ and $b$ denote the right hand side of \eqref{T_complexityC} and \eqref{exp2} respectively. By the definition of $c$ and the relation $\|x_0 - x_*\|_v \leq \mathcal{R}_{v,0}$, we see that when $k$ is sufficiently large,
\begin{equation}
\frac{b}{a} \approx \frac{4c}{ \|x_0 - x_*\|_v^2 + 2(F(x_0) - F^*)} \geq \frac{4}{3}.
\end{equation}

\subsubsection{Comparison in the strongly convex case}

For the special case of \eqref{D_F} where at least one of $f$ and $\Psi$ is strongly convex (i.e., $\mu_f+\mu_\Psi > 0$), Richtarik and Takac \cite{Richtarik12a} showed that for all $k \geq 0$, there holds
\begin{equation}
\E[F(x_k)-F^*|x_{k-1}] \leq \left( 1 - \alpha \frac{\mu_f + \mu_\Psi}{1 + \mu_\Psi} \right)(F(x_0)-F^*).
\end{equation}
It is not hard to observe that
\begin{equation}
\frac{2(\mu_f+\mu_\Psi)}{1 + \mu_f + 2\mu_\Psi} > \frac{\mu_f + \mu_\Psi}{1 + \mu_\Psi}.
\end{equation}
Recall that $\gamma$ is defined in \eqref{D_gamma}. Then it follows that for sufficiently large $k$ one has
\begin{eqnarray*}
\left(1 - \alpha \gamma \right)^k\left(\frac{1+\mu_\Psi}{2}R_0^2 + F(x_0)-F^*\right)
&\overset{\eqref{strongly_convex_4}}{\leq}& \left(1 - \alpha \gamma \right)^k\left(\frac{1+\mu_f+\mu_\Psi}{\mu_f+\mu_\Psi}\right) (F(x_0)-F^*).
\end{eqnarray*}

\subsection{Comparison of the iteration complexity results}\label{S_Discussion_complexity}

Here we compare the results in Theorem \ref{T_Complexity} with those in  \cite{Richtarik12a}.

\paragraph{Comparison in the convex case.}

For any $0<\epsilon<F(x_0)-F_*$ and $\rho \in (0,1)$, Richt\'arik and Tak\'a\v{c} \cite{Richtarik12a} showed that \eqref{E_highprob} holds for all $k \geq \tilde{K}$ where
\begin{equation}
  \tilde{K}\eqdef \frac{2c}{\alpha \epsilon}\left( 1+\log\left(\frac{1}{\rho}\right)\right) + 2- \frac{2c}{\alpha(F(x_0) -F_*)}.
\end{equation}
Using the definition of $c$ and the fact that $\|x_0 - x_*\|_v \leq\mathcal{R}_{v,0}$ we observe that
\begin{equation}
  \tau \eqdef \frac{\|x_0 - x_*\|_v^2 + 2\xi_0}{4 c}\leq \frac{3}{4}.
\end{equation}
By the definitions of $K$ and $\tilde{K}$ we have that for sufficiently small $\epsilon >0$,
\begin{equation}
  K - \tilde{K}\approx \frac{2 c \log \tau}{\alpha \epsilon} \leq -\frac{2 c \log(4/3)}{\alpha \epsilon}.
\end{equation}
In addition,  $\|x_0 - x_*\|_v$ can be much smaller than $\mathcal{R}_{v,0}$ and thus $\tau$ can be very small. It follows from the above that $K$ can be significantly smaller than $\tilde{K}$.

\paragraph{Comparison in the strongly convex case.}

In the strongly convex case (i.e., $\mu_f(w)+\mu_\Psi(w) >0$), Richt\'arik and Tak\'a\v c showed that \eqref{E_highprob} holds for all $k \geq \hat{K} $ where
\begin{equation*}
  \hat{K} \eqdef \frac{1}{\alpha}\frac{1 + \mu_\Psi(w)}{\mu_f(w) + \mu_\Psi(w)}\log \left(\frac{F(x_0)-F_*}{\epsilon \rho} \right).
\end{equation*}
We can see that for $\rho$ or $\epsilon$ sufficiently small we have
\begin{equation}
  \frac{K}{\hat{K}} \leq \frac{1 + \mu_f(w) + \mu_\Psi(w) }{2(1+\mu_\Psi(w))}\leq 1,
\end{equation}
because $\mu_f \leq 1$, which demonstrates that $K$ is smaller than $\hat{K}$.

\section{Numerical experiments}\label{sec:numerical}

In this Section we present preliminary computational results. The purpose of these experiments is to provide a numerical comparison of the performance of PCDM, under the different ESOs summarized in Appendix \ref{sec:asdfoijwofwaefwa}.

\paragraph{Least squares.}
Consider the following convex optimization problem
$ \displaystyle
\min_{x \in \R^N} \frac12 \|Ax-b\|_2^2,
$
where $A\in \R^{8\cdot 10^3 \times 2 \cdot 10^3}$.
Each row has between $1$ and $\omega=20$ nonzero elements (uniformly at random). For simplicity, we normalize (in $\ell_2$ norm) all the columns of $A$.
The value of $\sigma=\lambda_{\max}(A^TA) = 10.48$.
We have compared 5 different approaches which are given in Table~\ref{tab:leastSquareESOS}.
\begin{table}[htp]
 \caption{Approaches used in the numerical experiments.}
 \label{tab:leastSquareESOS}
 \centering
 \begin{tabular}{l|c|p{8cm}}
 \multicolumn{1}{c|}{Name} &   $v $ & \multicolumn{1}{|c}{Note} \\ \hline\hline
 BKBG &  $v_{BKBG}=L$ & This is na\"ive approach, which was proposed in
 \cite{Bradley} and \cite{richtarik2012efficient}.
Note that this is not ESO.
 \\
 \hline
 RT-P
 &$v_{RT-P}=(1+\frac{(\omega-1)(\tau-1)}{\max\{1,n-1\}}) L$
 & Theorem \ref{thm:niceESO}, originally derived in \cite{Richtarik12a}.
 \\\hline
 RT-D
 & $v_{RT-D}=(1+\frac{(\sigma-1)(\tau-1)}{\max\{1,n-1\}})L$
 & Derived in \cite{richtarik2013distributed} as a special case for $C=1$.
 \\\hline
 FR &  $v_{FR}=\hat L$& Theorem \ref{thm:niceNewESO}, proposed in \cite{Fercoq:accelerated} and {\bf generalized in this paper} (Theorem \ref{thm:NewESOforDUS}).
 \\\hline
 NC & $v_{NC} = \tilde L$& Theorem \ref{thm:IonESO}, proposed in \cite{necoara2013distributed}.\\
 \hline
\end{tabular}
\end{table}
Parameter $\tau=512$ and hence   $1+\frac{(\omega-1)(\tau-1)}{\max\{1,n-1\}}=5.856$ for RT-P and $1+\frac{(\sigma-1)(\tau-1)}{\max\{1,n-1\}}=3.424$ for RT-D approach.
The distribution of vectors $v$ can be found in Figure \ref{fig:LS} (right). Figure \ref{fig:LS} shows the evolution of $F(x_k)-F^*$
for all 5 methods. Note that the BKBK did not converge.
The speed of RT-P, RT-D and FR is quite similar and NC is approximately 3 times worse because $v_{NC} \approx 3.22 v_{FR}$.
\begin{figure}[htp]
 \centering
 \includegraphics[width=3in]{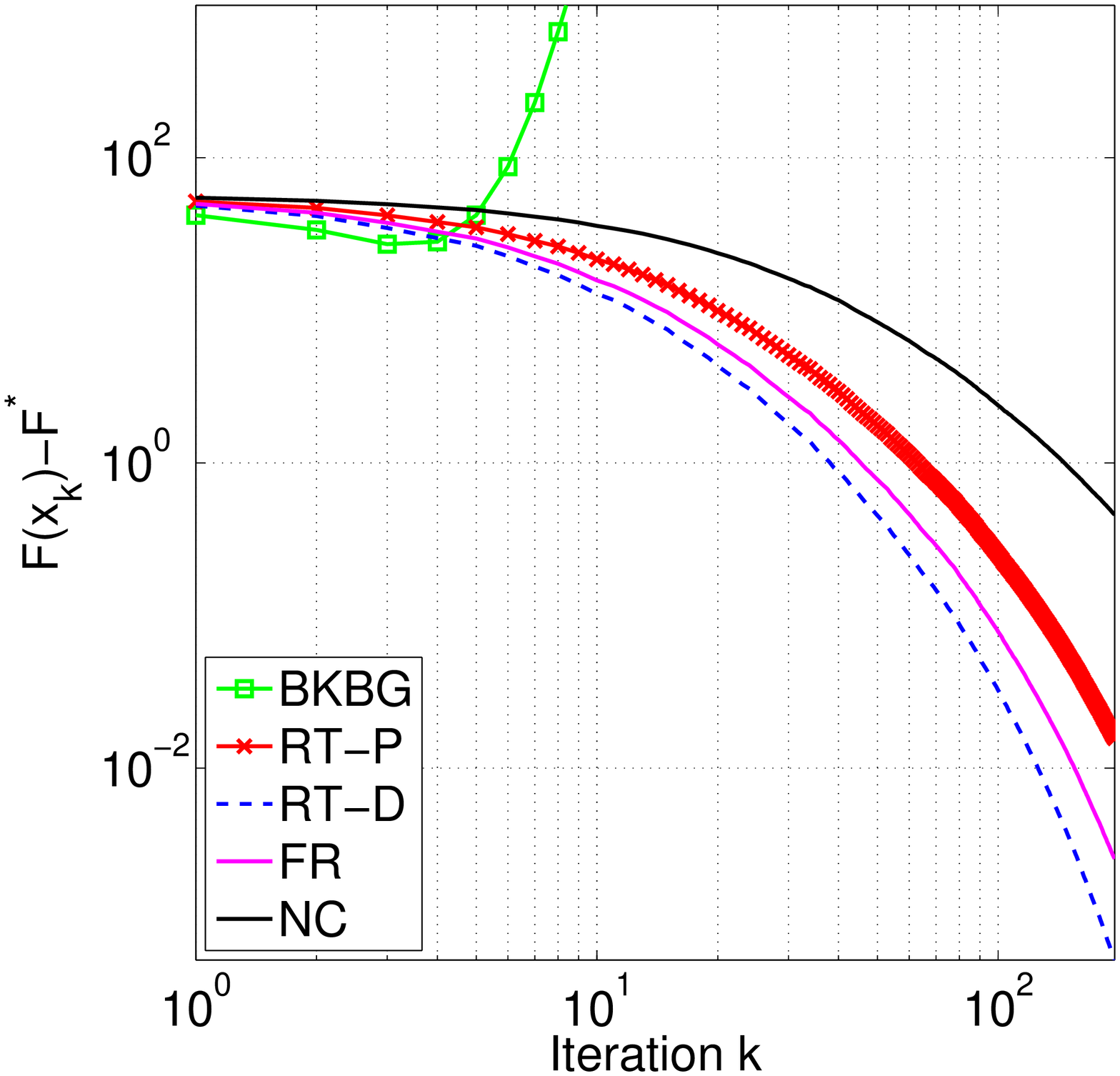}
 \includegraphics[width=3in]{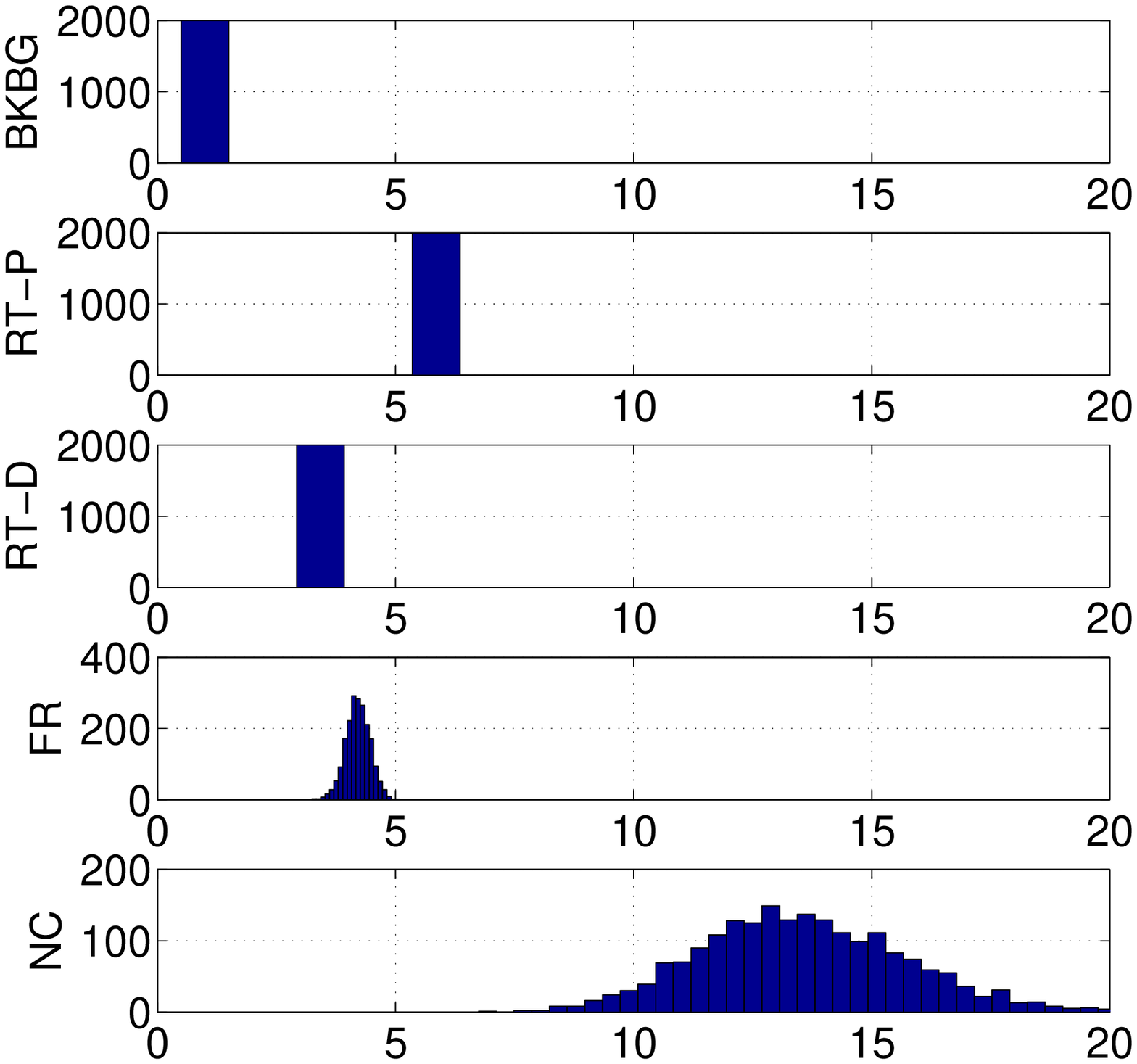}
 \caption{Evolution of $F(x_k)-F^*$ for 5 different methods (left) and distribution of $v$ (right).}
 \label{fig:LS}
\end{figure}

\paragraph{SVM dual.}
\begin{figure}[htp]
 \centering
 \begin{tabular}{c||cc}
\smash{\raisebox{100pt}{\rot{$\tau=32$}}}    &
 \includegraphics[width=3in]{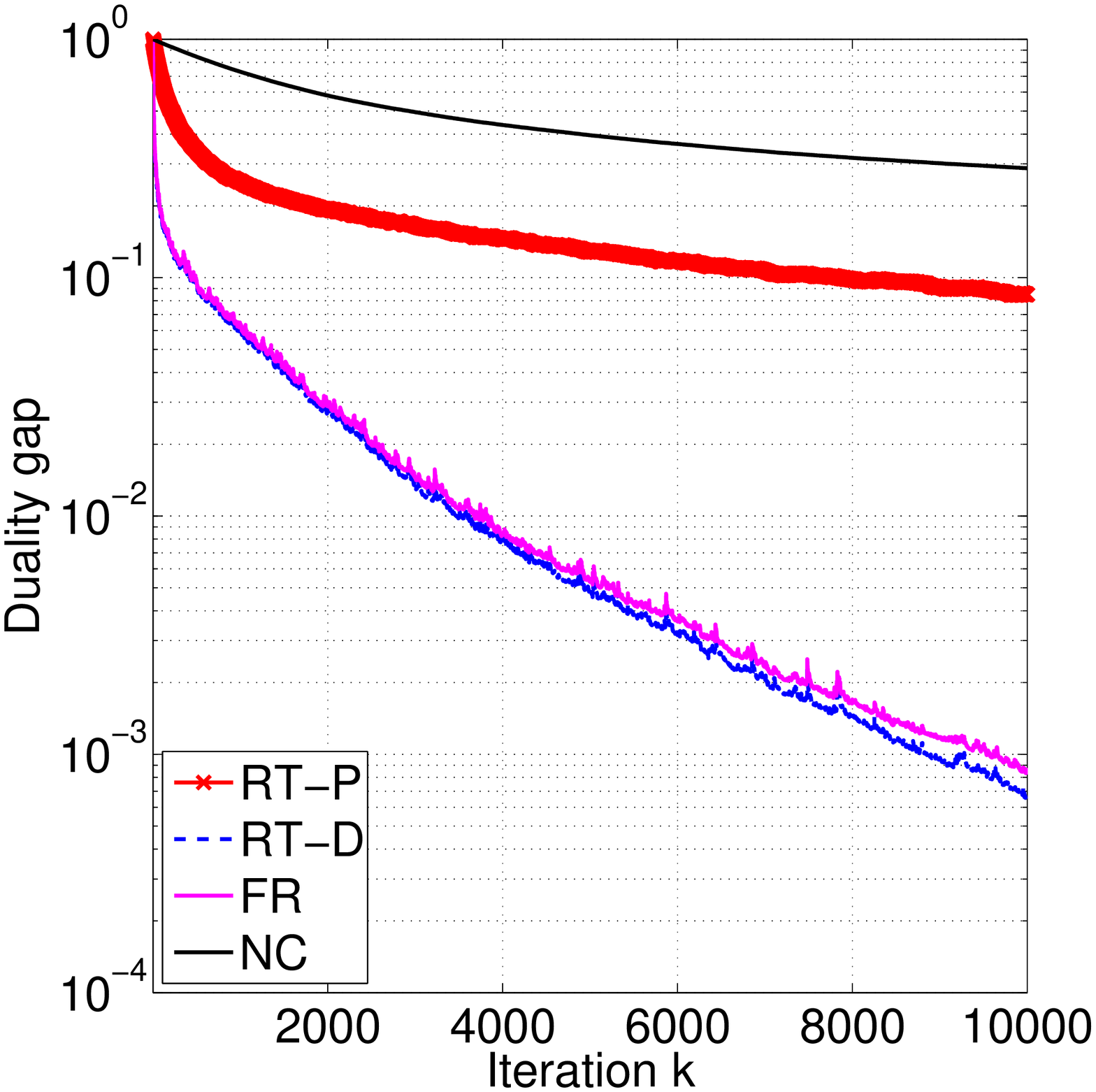} &
 \includegraphics[width=3in]{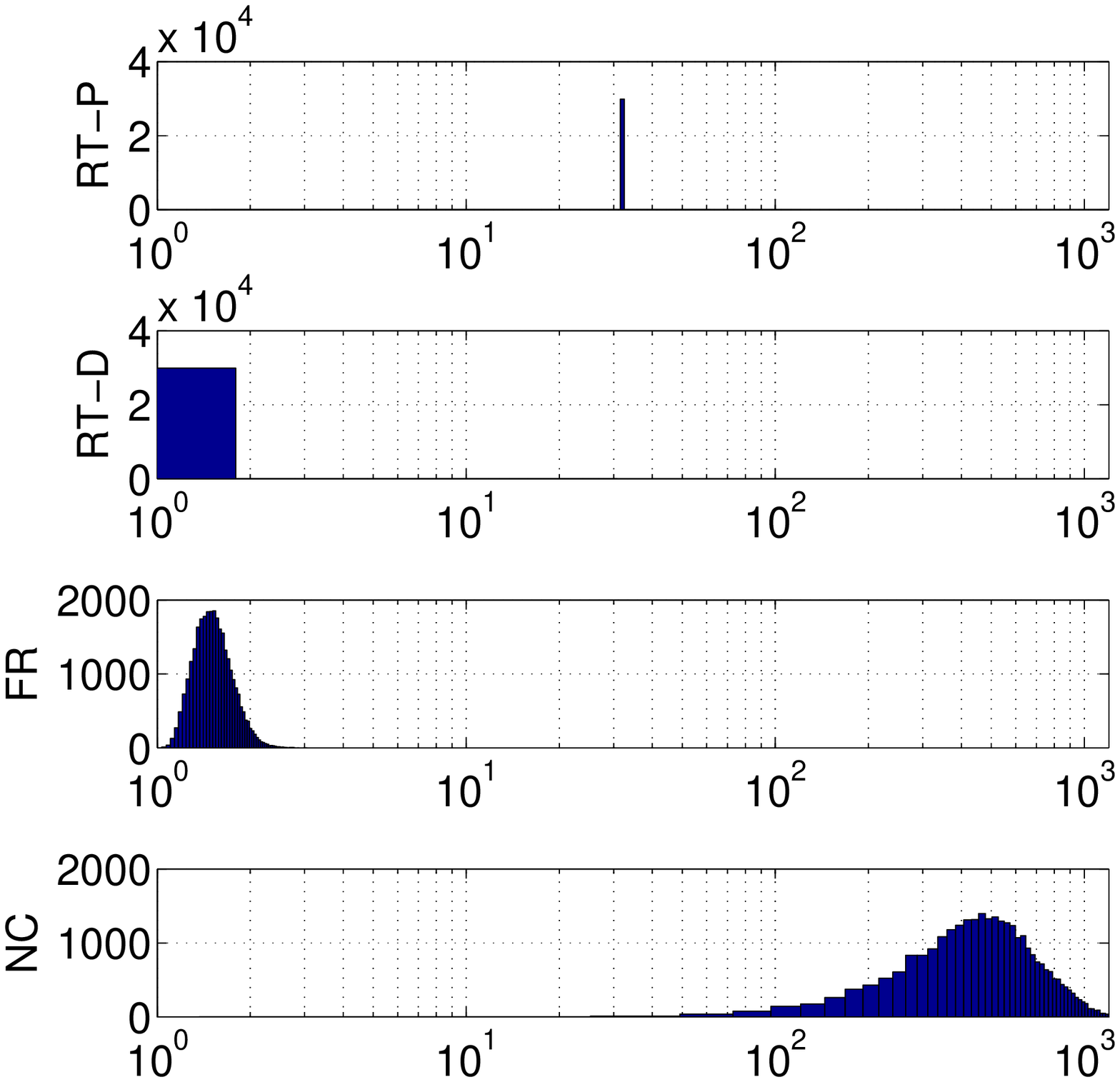}
 \\ \hline
\smash{\raisebox{100pt}{\rot{$\tau=256$}}}  &
 \includegraphics[width=3in]{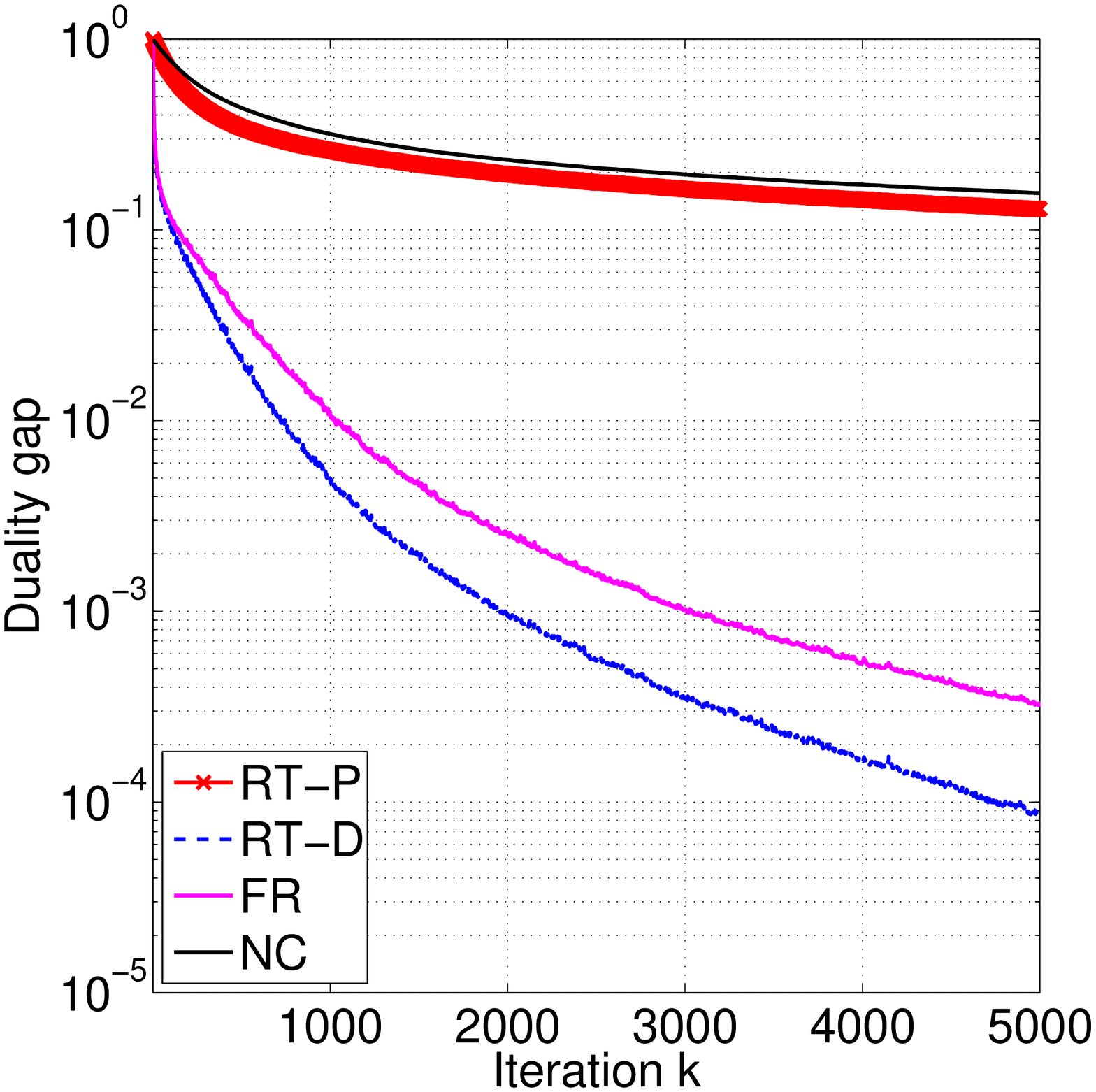} &
 \includegraphics[width=3in]{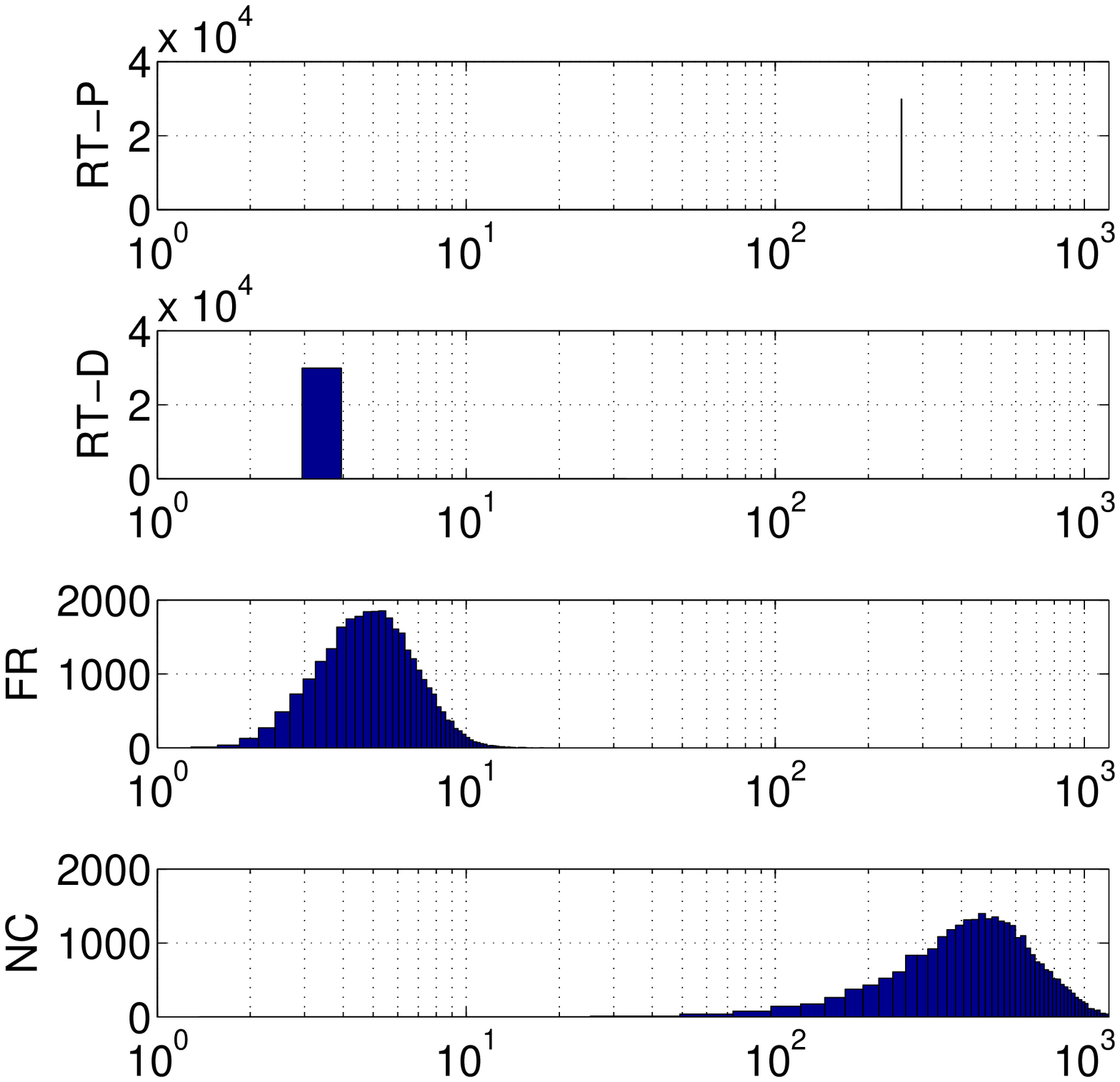}
\end{tabular}
 \caption{Comparison of evolution of $G(x_k)$ for various methods and the distribution of $v$.}
 \label{fig:SVM}
\end{figure}
In this experiment we compare 4 methods from Table \ref{tab:leastSquareESOS} (we have excluded the na\"ive approach because it usually diverges for large $\tau$)
on a real-world dataset \emph{astro-ph}, which consists of data from papers in physics \cite{pegasos}.
This dataset has $29,882$ training samples and a total of $99,757$ features.
This dataset is very sparse. Indeed, each sample uses on average only $77.317$ features and each sample belongs to one of two classes. Hence,
one might be interested in finding a hyperplane that separates the samples into their corresponding classes.
The optimization problem can be formulated as follows:
\begin{equation}\label{eq:SVM_P}
\min_{w} P(w)\eqdef \frac{\lambda}{2} \|w\|_2^2 + \frac1n \sum_{i=1}^N \max\{0, 1-\vc{y}{i} a_{(i)}^T w\},
\end{equation}
where $\vc{y}{i} \in \{-1,1\}$ is the label of the class to which sample $a_{(i)}\in\R^m$ belongs.

While problem formulation \eqref{eq:SVM_P} does not fit our framework (the nonsmooth part is nonseparable) the dual formulation (see \cite{hsieh2008dual,ShalevShawartzZhang,takac2013mini}) does:
\begin{equation}\label{eq:SVM_D}
 \max_{x  \in[0,1]^N}  D(x)\eqdef \frac1N {\bf 1}^T x - \frac{1}{2\lambda N^2} x^T Q x,
\end{equation}
where $Q\in \R^{N\times N}, Q_{i,j} = \vc{y}{i} \vc{y}{j} \langle a_{(i)}, a_{(j)}\rangle$. In particular, problem formulation \eqref{eq:SVM_D} is the sum of a smooth term, and the restriction $x  \in[0,1]^N$ can be formulated as a (block separable) indicator function.
In this dataset, each sample is normalized, hence $L=(1,\dots,1)^T$.

For any dual feasible point $x$ we can obtain a primal feasible point $w(x) = \frac1{\lambda n} \sum_{i=1}^N \vc{x}{i} \vc{y}{i} a_{(i)}$. Moreover, from strong duality we know that if $x^*$ is an optimal solution of \eqref{eq:SVM_D}, then $w^* = w(x^*)$ is optimal for problem \eqref{eq:SVM_P}.
Therefore, we can associate a gap $G(x)=P(w(x))-D(x)$ to each feasible point $x$, which measures the distance of the objective value from optimality. Clearly $G(x^*)=0$.

Figure \ref{fig:SVM} (left) shows the evolution of $G(x_k)$ as the iterates progress, and the distribution of ESO parameter $v$ for different choice of $\tau\in \{32, 256\}$.
Naturally, as $\tau$ increases, the distribution of $v, \hat v$ shifts to the right, whereas the distribution of $\tilde v$ is not influenced by changing $\tau$.
The value of important parameters for other methods are
$\sigma=287.273$ and $\omega=29881$.
For $\tau=32$ we have $1+\frac{(\omega-1)(\tau-1)}{\max\{1,n-1\}}=31.998$ for RT-P and $1+\frac{(\sigma-1)(\tau-1)}{\max\{1,n-1\}}=1.296$ for RT-D approach
and
for
$\tau=256$ we have $1+\frac{(\omega-1)(\tau-1)}{\max\{1,n-1\}}= 255.991$ for RT-P and $1+\frac{(\sigma-1)(\tau-1)}{\max\{1,n-1\}}=3.443$ for RT-D approach.
Again the best performance is given by RT-D which requires knowledge of $\sigma$.
If we do not want to estimate parameter $\sigma$ then we should use FR.
If was shown in \cite{Fercoq:accelerated} that for quadratic objective function FR is always better than RT-P.

\small
\bibliographystyle{plain}
\bibliography{ref2}
\normalsize


\appendix

\section{Expected Separable Overapproximation}\label{sec:ESO}

\subsection{ Smoothness assumptions}

In this work we assume that the function $f$ is partially separable and smooth, and the purpose of this section is to define these two concepts.
We begin with the definition of partial separability for a smooth convex function, introduced by Richt\'arik and Tak\'a\v{c} in \cite{Richtarik12a} .
\begin{definition}[Partial separability \cite{Richtarik12a}]
A smooth convex function $f:\R^N \to \R$ is  partially separable of degree $\omega$ if there exists a collection ${\cal J}$ of subsets of $\{1,2,\ldots, n\}$ such that
\begin{equation}
\label{E_separability}
     f(x)=\sum_{J\in \mathcal{J}}f_{J}(x) \qquad \text{ and }    \qquad \max_{J\in \mathcal{J}}|J| \leq \omega,
\end{equation}
where for each $J$,  $f_{J}$ is a smooth convex function that depends on $x^{(i)}$ for $i\in J$ only.
\end{definition}

Now we introduce different types of smoothness assumptions for the function $f$.
Each smoothness type gives rise to a different ESO. Note that \emph{all} of the following smoothness assumptions are \emph{equivalent}. That is, if a given function satisfies one of the assumptions, then there exist constants such that the other assumptions also hold.

The first type of assumption is a classical assumption in the literature \cite{richtarik2012efficient,
Richtarik12,Richtarik12a}.
\begin{assumption}[(Block) Coordinate-wise Lipschitz continuous gradient]
\label{asm:coordinateWiseLipConGrad}
The gradient of $f$ is block Lipschitz, uniformly in $x$, with positive constants $\lip_1,\dots,\lip_n$. That is, for all $x \in \R^N$, $i= 1,\dots,n$ and $h \in \R^{N_i}$ we have
\begin{equation}
\label{S2_Lipschitz}
     \| (\nabla f(x + U_i h))\ii - \gfi \|_{(i)}^* \leq \lip_i \|h\|_{(i)},
\end{equation}
where $\nabla f(x)$ denotes the gradient of $f$ and
\begin{equation}
\label{block_gradient}
     \gfi = U_i^T\nabla f(x) \in \R^{N_i}.
\end{equation}
\end{assumption}
The second type of assumption we make is that each function in the sum \eqref{E_separability} has a Lipschitz continuous gradient.
Such an assumption is made, for example, in \cite{johnson2013accelerating,konevcny2013semi,mahdavi2013mixed,necoara2013distributed}. Moreover, we allow each function to have Lipschitz continuous gradient with a \emph{different constant} (which was also assumed in \cite{necoara2013distributed}).
\begin{assumption}[Lipschitz continuous gradient of sub-functions]
\label{asm:lipGradient}
The gradient of $f_J, J\in \calJ$ has a Lipschitz continuous gradient, uniformly in $x$, with positive constant $\tilde \lip_J$ with respect to some Euclidean norm $\|\cdot\|_{(\tilde J)}$. That is, for all $x \in \R^N$, $J\in\calJ$ and $h \in \R^N$ we have
\begin{equation}\label{eq:lipGradient}
     \| \nabla f_J(x +   h) - \nabla f_J(x) \|_{(\tilde  J)}^* \leq \tilde\lip_J \|h\|_{(\tilde J)}.
\end{equation}
\end{assumption}
Note that this smoothness assumption is more general than that made in \cite{necoara2013distributed} because of the possibility of choosing general norms of the form $\|\cdot\|_{(\tilde J)}$. Further, Assumption \ref{asm:lipGradient} generalizes the smoothness assumptions imposed in \cite{Bradley,richtarik2013distributed}.

The third type of assumption we make is that each function in the sum \eqref{E_separability} has coordinate-wise Lipschitz continuous gradient.
\begin{assumption}[(Block) Coordinate-wise Lipschitz continuous gradient of sub-functions]
\label{asm:lipGradientCoordinateWise}
 The gradient of $f_J, J\in\calJ$ is block Lipschitz, uniformly in $x$, with non-negative constants $\hat\lip_{J,1},\dots,\hat\lip_{J,n}$. That is, for all $x \in \R^N$, $i= 1,\dots,n$, $J\in\calJ$ and $h \in \R^{N_i}$ we have
\begin{equation}\label{eq:lipGradientCoordinateWise}
     \| (\nabla f_J(x + U_i h))\ii -  (\nabla f_J(x))\ii \|_{(i)}^* \leq \hat\lip_{J,i} \|h\|_{(i)}.
\end{equation}
\end{assumption}
One can think of Assumptions \ref{asm:coordinateWiseLipConGrad} and \ref{asm:lipGradient} as being `opposite' to each other in the following sense. If we associate the block coordinates with the columns, and the functions with the rows, we see that Assumption \ref{asm:coordinateWiseLipConGrad} captures the dependence columns-wise, while Assumption \ref{asm:lipGradient} captures the dependence row-wise. Hence, Assumption \ref{asm:lipGradientCoordinateWise} can be thought of as an element-wise smoothness assumption.

To make this more concrete, we present an example that demonstrates how to compute the Lipschitz constants for a quadratic function, under each of the three smoothness assumptions stated above.
\begin{example}\label{exm:quadraticAndLipConstants}
Let the function $f(x) =\frac12 \|Ax-b\|_2^2=\frac12 \sum_{j=1}^m (\vc{b}{i}-\sum_{i=1}^n a_{j,i}  \vc{x}{i})^2$,
where $A \in \R^{m \times n}$ and $a_{j,i}$ is $(j,i)$th element of the matrix $A$.
Let us fix all the norms $\|\cdot\|_{(\tilde J)}$ from Assumption \ref{asm:lipGradient} to be standard Euclidean norms.
Then one can easily verify that equations
\eqref{S2_Lipschitz}, \eqref{eq:lipGradient} and \eqref{eq:lipGradientCoordinateWise}
are satisfied with the following choice of constants
\begin{align*}
 \lip_i &= \sum_{j=1}^m a_{j,i}^2,
 &
 \tilde \lip_j &= \sum_{i=1}^n a_{j,i}^2,
 &
 \hat \lip_{j,i} &= a_{j,i}^2.
\end{align*}
In words, $\lip_i$ is equal to square of the $\ell_2$ norm of $\rm i$th column,
$\tilde \lip_j$ is equal to the square of the $\ell_2$ norm of the $\rm j$th row
and $\hat \lip_{j,i}$ is simply the square of the $\rm{(j,i)}$th element of the matrix $A$.
\end{example}
One could be misled into believing that Assumption \ref{asm:lipGradientCoordinateWise} is the best because it is the most restrictive. However, while this is true for the quadratic objective shown in Example \ref{exm:quadraticAndLipConstants}, for a general convex function, Assumption \ref{asm:lipGradientCoordinateWise} can give Lipschitz constants that lead to worse ESO bounds  (see   Example \ref{exm:logisticLoss} for further details).


\subsection{Expected Separable Overapproximation (ESO)}
\label{sec:asdfoijwofwaefwa}
Now, it is clear that the update $h$ in Algorithm \ref{PCDM}
depends on the ESO parameter  $v$. This shows that the ESO is not just a technical tool; the parameters are actually \emph{used} in Algorithm \ref{PCDM}. Therefore we must be able to obtain/compute these parameters easily. We now present the following three theorems, namely Theorems \ref{thm:niceESO}, \ref{thm:niceNewESO} and \ref{thm:IonESO}, that explain how to obtain the   $v$ parameter for a $\tau$-nice sampling, under different smoothness assumptions.

\begin{theorem}[ESO for a $\tau$-nice sampling, Theorem 14 in \cite{Richtarik12a}]
\label{thm:niceESO}
Let Assumption \ref{asm:coordinateWiseLipConGrad} hold with constants $\lip_1, \dots, \lip_n$ and let $\hatS$ be a $\tau$-nice sampling. Then
 $f:\R^N \to \R$ admits an ESO with respect to the sampling $\hatS$ with parameter
 $$v=\left(1+\frac{(\omega-1)(\tau-1)}{\max\{1,n-1\}}\right) L,
$$
where $L=(\lip_1, \dots, \lip_n)^T$.
\end{theorem}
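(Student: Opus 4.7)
The plan is to combine the partial separability of $f$ with the symmetry of the $\tau$-nice sampling, reproducing the argument sketched in \cite{Richtarik12a}. I would begin with the decomposition $f = \sum_{J \in \calJ} f_J$ from \eqref{E_separability}. Since each $f_J$ depends only on $x^{(i)}$ for $i \in J$, we have $f_J(x + h_{[\hat S]}) = f_J(x + h_{[J \cap \hat S]})$, so the effective random update for each summand is supported on $J \cap \hat S$, a set of size at most $\omega$. This decomposition reduces the ESO for $f$ to a collection of bounds for functions each depending on only $\omega$ blocks.

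Next I would establish a quadratic upper bound on each $f_J$ for updates supported on $J$. Since $\nabla^2 f = \sum_J \nabla^2 f_J$ with all Hessians PSD, each $f_J$ inherits block Lipschitz constants no larger than the $L_i$'s of $f$. Applying the descent lemma block-wise and using the convexity of $f_J$ to absorb bilinear cross-terms into the diagonal, one obtains an inequality whose upper bound is a linear combination of the diagonal terms $\tfrac12 L_i\|u^{(i)}\|_{(i)}^2$ and of pairwise cross-terms indexed by distinct $i,j \in J$. Summing over $J \in \calJ$ with $u = h_{[J \cap \hat S]}$, and taking expectation using the two-point marginals of the $\tau$-nice sampling,
\[
\Prob(i \in \hat S) = \tfrac{\tau}{n}, \qquad \Prob(\{i,j\} \subseteq \hat S) = \tfrac{\tau(\tau-1)}{n(n-1)} \ (i \neq j),
\]
the linear piece yields exactly $\frac{\tau}{n}\langle \nabla f(x), h\rangle$, while the quadratic pieces collapse (after summing $J$ over all sets in $\calJ$ containing $i$) into a single term of the form $\frac{\tau}{2n}\beta \sum_i L_i \|h^{(i)}\|_{(i)}^2$. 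The ``$1$'' inside $\beta$ is the diagonal ($i=j$) contribution, and the extra term $(\omega-1)(\tau-1)/\max\{1,n-1\}$ arises because each block $i$ has at most $\omega-1$ potential partners in any $J \ni i$, each contributing a conditional factor $(\tau-1)/(n-1)$ given that $i \in \hat S$.

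The main obstacle will be deriving the sharper quadratic upper bound on $f_J$ for multi-block updates. A naive block-by-block descent lemma would produce a factor of $|J| \leq \omega$ everywhere, giving $\omega$ rather than $\beta$ in the final ESO, which is far too pessimistic and would eliminate any benefit of taking $\tau \ll n$. The critical technical point is that the bilinear cross-terms in the Taylor expansion of $f_J$ must be controlled so that a pair $(i,j)$ of distinct blocks contributes \emph{only when both are sampled}, i.e.\ with probability $\tau(\tau-1)/(n(n-1))$, rather than contributing for every $i$ that is sampled. Once this refined bound is in place, the remainder is a bookkeeping computation exploiting the highly symmetric structure of the $\tau$-nice sampling, and the definition of partial separability which caps the number of pairs $(i,j)$ occurring in any single $J$ by $\omega(\omega-1)$.
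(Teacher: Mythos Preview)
The paper does not actually prove this theorem; it is quoted verbatim as Theorem~14 of \cite{Richtarik12a} and no argument is supplied here. So there is nothing in the present paper to compare your proposal against, and your sketch should be measured against the original proof in \cite{Richtarik12a}.

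Your overall architecture is the right one: decompose $f=\sum_J f_J$, use that $f_J$ only sees $h_{[J\cap\hat S]}$, bound each summand quadratically, take expectation over the $\tau$-nice sampling, and use $|J|\le\omega$. Where your proposal is imprecise is the quadratic bound on $f_J$. You describe it as a Taylor expansion with explicit bilinear cross-terms in pairs $(i,j)$, to be handled via the pair probability $\Prob(\{i,j\}\subseteq\hat S)$. That route is workable in principle but awkward, because you would need uniform control of the off-diagonal blocks of $\nabla^2 f_J$ in terms of the diagonal constants $L_i$, and your claim that ``each $f_J$ inherits block Lipschitz constants no larger than the $L_i$'s of $f$'' only gives you the diagonals.

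The cleaner device used in \cite{Richtarik12a} avoids cross-terms entirely. One proves the deterministic lemma: if $g$ is convex, depends only on blocks in a set $S$ with $|S|=k$, and has block Lipschitz constants $M_i$, then
\[
g(x+u)\;\le\;g(x)+\langle\nabla g(x),u\rangle+\frac{k}{2}\sum_{i\in S} M_i\|u^{(i)}\|_{(i)}^2,
\]
obtained by writing $u=\tfrac{1}{k}\sum_{i\in S}(kU_iu^{(i)})$ and combining convexity with the one-block descent lemma. Applying this to $f_J$ with $S=J\cap\hat S$ gives a bound whose only random ingredient is the scalar $|J\cap\hat S|$ multiplying $\tfrac12\sum_{i\in J\cap\hat S}L_i\|h^{(i)}\|_{(i)}^2$. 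Taking expectation then reduces to the elementary identity, for $i\in J$,
\[
\Exp\big[|J\cap\hat S|\cdot \mathbf{1}(i\in\hat S)\big]=\sum_{j\in J}\Prob(\{i,j\}\subseteq\hat S)=\frac{\tau}{n}\Big(1+\frac{(|J|-1)(\tau-1)}{n-1}\Big)\le\frac{\tau}{n}\Big(1+\frac{(\omega-1)(\tau-1)}{n-1}\Big),
\]
which is exactly the computation you identify at the end. So the ``pairwise'' structure you anticipate does appear, but it enters through $|J\cap\hat S|=\sum_{j\in J}\mathbf{1}(j\in\hat S)$ rather than through genuine bilinear terms in $h$. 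With this substitution your sketch becomes a correct proof.
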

The obvious disadvantage of Theorem \ref{thm:niceESO} is the fact that $v$ in the ESO, depends on $\omega$. (When $\omega$ is large, so too is $v$.) One can imagine a situation in which $\omega$ is much larger than the average cardinality of $J \in \mathcal{J}$, resulting in a large $v$. For example, if $|J|$ for $J \in \mathcal{J}$ is small for all but one function.

With this in mind, we introduce a new theorem that shows how the ESO in Theorem~\ref{thm:niceESO} can be modified if we know that Assumption  \ref{asm:lipGradientCoordinateWise} holds. In this case, the role of $\omega$ is slightly suppressed.
\begin{theorem}[ESO for a doubly uniform sampling]
\label{thm:NewESOforDUS}
Let Assumption \ref{asm:lipGradientCoordinateWise} hold with constants $\hat \lip_{J,i}, J\in\calJ, i\in \{1,\dots,n\}$ and let $\hatS$ be a doubly uniform sampling. Then
 $f:\R^N \to \R$ admits an ESO with respect to the sampling $\hatS$ with parameter
\begin{equation}
 \bar v = \sum_{J \in \calJ} \left(1+\frac{\left(\frac{\Exp[|\hatS|^2]}{\Exp[|\hatS|]}-1\right)(|J|-1)}{\max\{1,n-1\}}\right)
   (\hat \lip_{J,1},\dots,\hat \lip_{J,n})^T.
\end{equation}
\end{theorem}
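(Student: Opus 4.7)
The plan is to exploit partial separability $f = \sum_{J \in \mathcal{J}} f_J$, bound each piece $f_J(x + h_{[\hat S]})$ individually by combining a Jensen-type convexity argument with Assumption \ref{asm:lipGradientCoordinateWise}, and then take the expectation using the low-order moments of the doubly uniform sampling.

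\textbf{Step 1: Reduce to one $f_J$ at a time.} Because $f_J$ depends only on the blocks in $J$, we have $f_J(x + h_{[\hat S]}) = f_J(x + h_{[\hat S \cap J]})$. Fix a realization $S$ of $\hat S$ and set $s = |S \cap J|$. When $s \geq 1$, I would use the convex combination identity
\begin{equation*}
x + \sum_{i \in S \cap J} U_i h^{(i)} \;=\; \frac{1}{s}\sum_{i \in S \cap J}\bigl(x + s\, U_i h^{(i)}\bigr),
\end{equation*}
together with the convexity of $f_J$, to conclude
\begin{equation*}
f_J(x + h_{[S \cap J]}) \;\leq\; \frac{1}{s}\sum_{i \in S \cap J} f_J\bigl(x + s\, U_i h^{(i)}\bigr).
\end{equation*}
(The case $s = 0$ gives $f_J(x + h_{[S \cap J]}) = f_J(x)$, which is covered by the right-hand side of the ESO.)

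\textbf{Step 2: One-block upper bound.} Assumption \ref{asm:lipGradientCoordinateWise}, combined with the standard descent lemma for functions with coordinate-wise Lipschitz gradient, yields
\begin{equation*}
f_J(x + s\, U_i h^{(i)}) \;\leq\; f_J(x) + s\,\langle (\nabla f_J(x))^{(i)}, h^{(i)}\rangle + \tfrac{\hat L_{J,i} s^2}{2}\|h^{(i)}\|_{(i)}^2.
\end{equation*}
Plugging into Step 1 and using $s = |S \cap J|$ gives
\begin{equation*}
f_J(x + h_{[S \cap J]}) \;\leq\; f_J(x) + \sum_{i \in S \cap J}\langle (\nabla f_J(x))^{(i)}, h^{(i)}\rangle + \tfrac{|S \cap J|}{2}\sum_{i \in S \cap J}\hat L_{J,i}\|h^{(i)}\|_{(i)}^2.
\end{equation*}

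\textbf{Step 3: Take expectation using doubly uniform moments.} Let $c_1 = \Prob(i \in \hat S) = \E[|\hat S|]/n$ and, because $\hat S$ is doubly uniform, $c_2 = \Prob(i \in \hat S, j \in \hat S)$ is the same for every pair $i \neq j$. Counting pairs through $\sum_{i \neq j}\mathbb{1}_{\{i,j \in \hat S\}} = |\hat S|^2 - |\hat S|$ gives
\begin{equation*}
c_2 \;=\; \tfrac{\E[|\hat S|^2] - \E[|\hat S|]}{n(n-1)}.
\end{equation*}
The linear term in $h$ is straightforward: $\E\bigl[\sum_{i \in \hat S \cap J}\langle (\nabla f_J(x))^{(i)}, h^{(i)}\rangle\bigr] = c_1 \langle \nabla f_J(x), h\rangle$. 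For the quadratic term, I would compute, for $i \in J$,
\begin{equation*}
\E\bigl[|\hat S \cap J|\cdot \mathbb{1}_{\{i \in \hat S\}}\bigr] \;=\; c_1 + (|J|-1)c_2 \;=\; c_1\Bigl(1 + \tfrac{(|J|-1)(\E[|\hat S|^2]/\E[|\hat S|]-1)}{n-1}\Bigr),
\end{equation*}
so that
\begin{equation*}
\E\Bigl[|\hat S \cap J|\sum_{i \in \hat S \cap J}\hat L_{J,i}\|h^{(i)}\|_{(i)}^2\Bigr] \;=\; c_1\Bigl(1 + \tfrac{(|J|-1)(\E[|\hat S|^2]/\E[|\hat S|]-1)}{n-1}\Bigr)\sum_{i \in J}\hat L_{J,i}\|h^{(i)}\|_{(i)}^2.
\end{equation*}

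\textbf{Step 4: Sum over $J$ and identify $\bar v$.} Summing over $J \in \mathcal{J}$ and using $\nabla f = \sum_J \nabla f_J$ produces exactly
\begin{equation*}
\E[f(x + h_{[\hat S]})] \;\leq\; f(x) + \tfrac{\E[|\hat S|]}{n}\Bigl(\langle \nabla f(x), h\rangle + \tfrac{1}{2}\|h\|_{\bar v}^2\Bigr),
\end{equation*}
with $\bar v_i = \sum_{J \ni i}\bigl(1 + \tfrac{(|J|-1)(\E[|\hat S|^2]/\E[|\hat S|]-1)}{n-1}\bigr)\hat L_{J,i}$, which coincides with the stated formula once we use the convention $\hat L_{J,i} = 0$ for $i \notin J$ and replace $n-1$ by $\max\{1,n-1\}$ to handle $n=1$.

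The main obstacle I expect is Step 3: correctly identifying that for a doubly uniform sampling the pairwise inclusion probability $c_2$ is constant across pairs and deriving its closed form in terms of $\E[|\hat S|^2]$. The one-block Jensen step (Step 1) also needs a brief argument to cover $|S \cap J| = 0$, but this is minor.
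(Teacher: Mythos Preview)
Your proof is correct. The Jensen-plus-descent-lemma argument in Steps~1--2, the moment computation for the doubly uniform sampling in Step~3 (in particular, the derivation of the pairwise inclusion probability $c_2=(\Exp[|\hat S|^2]-\Exp[|\hat S|])/[n(n-1)]$), and the summation in Step~4 all go through as written.

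The paper's own proof takes a more modular route: it does not re-derive any of Steps~1--3, but instead invokes Theorem~15 of \cite{Richtarik12a} to obtain, for each individual $f_J$, an ESO with parameter $\bigl(1+\tfrac{(\Exp[|\hat S|^2]/\Exp[|\hat S|]-1)(|J|-1)}{\max\{1,n-1\}}\bigr)(\hat L_{J,1},\dots,\hat L_{J,n})^T$, and then invokes Theorem~10 of \cite{Richtarik12a} (ESO is closed under conic combination) to sum these into an ESO for $f=\sum_J f_J$. Your Steps~1--3 are exactly the content of Theorem~15 applied to a single $f_J$ (Jensen on the convex combination of one-block moves, then the coordinate-wise descent lemma, then the doubly-uniform moment identity), and your Step~4 is precisely the conic-combination step. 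So the underlying mathematics is the same; you have simply unfolded the two cited theorems into a self-contained argument, whereas the paper treats them as black boxes. Your version has the advantage of being readable without \cite{Richtarik12a} at hand; the paper's version makes clear that nothing new is happening beyond combining two known building blocks.
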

\begin{proof}
From Theorem 15 in \cite{Richtarik12a} we know that for each function
$f_J, J\in\calJ$
we have
$$
(f_J, \hatS) \sim
 ESO\left(1+\frac{\left(\frac{\Exp[|\hatS|^2]}{\Exp[|\hatS|]}-1\right)(|J|-1)}{\max\{1,n-1\}},
   (\hat\lip_{J,1},\dots,\hat\lip_{J,n})^T  \right).
$$
Now, using Theorem 10 in \cite{Richtarik12a}, which deals with conic combinations of functions, we have
$$
\left(\sum_{J\in\calJ}f_J, \hatS\right) \sim
 ESO\left(1,
   \sum_{J\in\calJ} \left(1+\frac{\left(\frac{\Exp[|\hatS|^2]}{\Exp[|\hatS|]}-1\right)(|J|-1)}{\max\{1,n-1\}}\right)(\hat\lip_{J,1},\dots,\hat\lip_{J,n})^T  \right).
$$
\end{proof}

The following Theorem is a special case of Theorem \ref{thm:NewESOforDUS}
for a $\tau$-nice sampling.
\begin{theorem}[ESO for a $\tau$-nice sampling, Theorem 1 in \cite{Fercoq:accelerated}]
\label{thm:niceNewESO}
Let Assumption \ref{asm:lipGradientCoordinateWise} hold with constants $\hat \lip_{J,i}, J\in\calJ, i\in \{1,\dots,n\}$ and let $\hatS$ be a $\tau$-nice sampling. Then
$f:\R^N \to \R$ admits an ESO with respect to the sampling $\hatS$ with parameter
\begin{equation}
 \hat v = \sum_{J \in \calJ} \left(1+\frac{(\tau-1)(|J|-1)}{\max\{1,n-1\}}\right)
   (\hat \lip_{J,1},\dots,\hat \lip_{J,n})^T.
\end{equation}
\end{theorem}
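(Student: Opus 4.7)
The plan is to derive Theorem \ref{thm:niceNewESO} as an immediate corollary of the more general Theorem \ref{thm:NewESOforDUS}. The only things I need to verify are: (a) that a $\tau$-nice sampling is doubly uniform, so Theorem \ref{thm:NewESOforDUS} applies, and (b) that the ratio $\Exp[|\hatS|^2]/\Exp[|\hatS|]$ collapses to $\tau$ in this case, after which substitution into the formula for $\bar v$ yields exactly $\hat v$.

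First, I would observe that a $\tau$-nice sampling assigns positive probability only to subsets $S \subseteq \{1,\dots,n\}$ with $|S| = \tau$, and does so with the uniform value $\tau!(n-\tau)!/n! = 1/\binom{n}{\tau}$. Thus any two subsets of equal cardinality (which, here, forces cardinality equal to $\tau$) are sampled with equal probability, so $\hatS$ is doubly uniform and Theorem \ref{thm:NewESOforDUS} is applicable.

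Next, since $|\hatS| = \tau$ almost surely, we get $\Exp[|\hatS|] = \tau$ and $\Exp[|\hatS|^2] = \tau^2$, hence
\begin{equation*}
\frac{\Exp[|\hatS|^2]}{\Exp[|\hatS|]} - 1 \;=\; \tau - 1.
\end{equation*}
Plugging this into the expression for $\bar v$ in Theorem \ref{thm:NewESOforDUS} reproduces the claimed vector
\begin{equation*}
\hat v \;=\; \sum_{J \in \calJ} \left(1+\frac{(\tau-1)(|J|-1)}{\max\{1,n-1\}}\right) (\hat \lip_{J,1},\dots,\hat \lip_{J,n})^T,
\end{equation*}
and the conclusion follows directly from Theorem \ref{thm:NewESOforDUS}.

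There is essentially no obstacle here; the result is a pure specialization. The only thing worth being careful about is making sure that the $\tau$-nice sampling is indeed doubly uniform (which is immediate from the probability mass function), so that one is entitled to invoke Theorem \ref{thm:NewESOforDUS} rather than having to rerun the conic-combination argument from scratch.
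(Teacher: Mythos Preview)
Your proposal is correct and follows exactly the same route as the paper: invoke Theorem~\ref{thm:NewESOforDUS} after checking that a $\tau$-nice sampling is doubly uniform and computing the first two moments of $|\hatS|$. In fact, your computation $\Exp[|\hatS|^2]=\tau^2$ is the right one; the paper's proof writes ``$\Exp[|\hatS|^2]=\tau$'', which is evidently a typo, since what is actually needed (and what both you and the paper use) is the ratio $\Exp[|\hatS|^2]/\Exp[|\hatS|]=\tau$.
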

\begin{proof}
Notice that, if $\hatS$ is $\tau$-nice sampling, then
 $\Exp[|\hatS|]=\tau$ and $\Exp[|\hatS|^2]=\tau$ and the result follows from Theorem \ref{thm:NewESOforDUS}.
\end{proof}

The following theorem explains how to compute an ESO if Assumption \ref{asm:lipGradient} holds.
This ESO was proposed and proved in \cite{necoara2013distributed}.
\begin{theorem}[ESO for $\tau$-nice sampling, Lemma 1 in \cite{necoara2013distributed}]
\label{thm:IonESO}
Let Assumption \ref{asm:lipGradient} hold with constants $\tilde \lip_J, J\in\calJ$ and let $\hatS$ be a $\tau$-nice sampling. Then
$f:\R^N \to \R$ admits an ESO with respect to the sampling $\hatS$ with parameter
$$
\tilde v = \sum_{J\in\calJ}
  \tilde \lip_J \vsubset{e}{J},
$$
where $e = (1,\dots,1)^T \in \R^n$.
Moreover, this ESO is monotonic.
\end{theorem}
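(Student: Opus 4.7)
The plan is to derive a stronger, deterministic per-realization bound
\[ f(x+h_{[S]}) \leq f(x) + \langle \nabla f(x), h_{[S]}\rangle + \tfrac{1}{2}\|h_{[S]}\|_{\tilde v}^2 \]
valid for every set $S$ with $\Prob(\hat S = S)>0$, and then integrate it against the $\tau$-nice sampling $\hat S$ to recover \eqref{eq:ESOdef}. Because this bound holds path-wise (not merely in expectation), it immediately implies the monotonic property: evaluating it at $h=h(x)$, the block-separable minimiser used in Algorithm~\ref{PCDM}, the right-hand side at $h=0$ equals $f(x)$ and the minimisation in $h$ can only decrease the bound, so $f(x+h(x)_{[S]})\leq f(x)$ for every admissible $S$.

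The core step is the classical descent lemma applied to each sub-function: Assumption~\ref{asm:lipGradient} gives $f_J(x+u) \leq f_J(x) + \langle \nabla f_J(x), u\rangle + \tfrac{\tilde L_J}{2}\|u\|_{(\tilde J)}^2$ for any $u \in \R^N$. Since $f_J$ is constant in the blocks $i\notin J$, both $f_J(x+h_{[S]}) = f_J(x+h_{[S\cap J]})$ and $\langle \nabla f_J(x), h_{[S]}\rangle = \langle \nabla f_J(x), h_{[S\cap J]}\rangle$, so I may substitute $u = h_{[S\cap J]}$ into the descent lemma without altering its left-hand side.

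Summing over $J \in \calJ$, additivity of the gradient yields $\sum_{J}\langle \nabla f_J(x), h_{[S\cap J]}\rangle = \langle \nabla f(x), h_{[S]}\rangle$. For the quadratic piece, expand $\|h_{[S\cap J]}\|_{(\tilde J)}^2 = \sum_{i \in S\cap J}\|h^{(i)}\|_{(i)}^2$ (using the implicit compatibility between $\|\cdot\|_{(\tilde J)}$ and the block norms $\|\cdot\|_{(i)}$), swap the order of summation, and recognise the coefficient of $\|h^{(i)}\|_{(i)}^2$ as $\tilde v_i = \sum_{J\ni i}\tilde L_J$. This delivers exactly the displayed per-realization bound above.

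Taking expectation over the $\tau$-nice sampling is then routine: uniformity and linearity give $\Exp[\langle g, h_{[\hat S]}\rangle] = (\tau/n)\langle g,h\rangle$ and $\Exp[\|h_{[\hat S]}\|_{\tilde v}^2] = (\tau/n)\|h\|_{\tilde v}^2$ for arbitrary $g,h$; combined with $\Exp[|\hat S|]=\tau$, inequality \eqref{eq:ESOdef} with $v=\tilde v$ drops out. The only real obstacle is aligning the Euclidean norm $\|\cdot\|_{(\tilde J)}$ of Assumption~\ref{asm:lipGradient} with the block norms $\|\cdot\|_{(i)}$ appearing in $\|\cdot\|_v$, specifically the identity $\|u\|_{(\tilde J)}^2 = \sum_{i\in J}\|u^{(i)}\|_{(i)}^2$; once this compatibility is granted, the rest is straightforward bookkeeping, which is likely why the original source states the result without an expanded argument.
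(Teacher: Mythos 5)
The paper does not actually prove this theorem: it is imported verbatim from Necoara et al.\ (``proposed and proved in \cite{necoara2013distributed}''), so there is no in-paper argument to compare against. Your derivation of the ESO inequality itself is the natural one and is sound: the per-$J$ descent lemma, the observation that $f_J$ and $\nabla f_J$ see only the blocks in $J$ so that $u=h_{[S\cap J]}$ may be substituted, the swap of summation giving $\tilde v_i=\sum_{J\ni i}\tilde L_J$, and the expectation step $\Prob(i\in\hat S)=\tau/n$ are all correct. You are also right to flag the norm compatibility $\|u\|_{(\tilde J)}^2=\sum_{i\in J}\|u^{(i)}\|_{(i)}^2$ as a hypothesis the theorem silently needs; Assumption \ref{asm:lipGradient} as stated allows arbitrary Euclidean norms $\|\cdot\|_{(\tilde J)}$, and without this decomposition the stated $\tilde v$ does not follow (the paper's Example \ref{exm:quadraticAndLipConstants} quietly fixes all these norms to be the standard one).

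The genuine gap is in your monotonicity argument. You claim $f(x+h(x)_{[S]})\leq f(x)$ because ``the minimisation in $h$ can only decrease the bound,'' but $h(x)$ minimises the \emph{composite} model $\fH(x,\cdot)$ from \eqref{Def_H}, which includes $\Psi(x+h)$, not the smooth quadratic model $f(x)+\langle\nabla f(x),h\rangle+\tfrac12\|h\|_{\tilde v}^2$ alone. When $\Psi\not\equiv 0$ the minimiser of the composite model can make the quadratic part strictly positive (it trades that off against a decrease in $\Psi$), so $\langle\nabla f(x),h(x)_{[S]}\rangle+\tfrac12\|h(x)_{[S]}\|_{\tilde v}^2\leq 0$ — and hence your stated conclusion — can fail. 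The repair is to add $\Psi(x+h_{[S]})$ to both sides of your path-wise bound and invoke the block-wise optimality of $h(x)$: for each $i$, $\langle(\nabla f(x))^{(i)},h^{(i)}\rangle+\tfrac{\tilde v_i}{2}\|h^{(i)}\|_{(i)}^2+\Psi_i(x^{(i)}+h^{(i)})\leq\Psi_i(x^{(i)})$, which upon summing over $i\in S$ yields $F(x+h(x)_{[S]})\leq F(x)$ for every realisation $S$. This deterministic decrease of $F$ (not of $f$) is the property that Theorem \ref{T_Complexity} actually uses to keep the iterates in $\mathcal{L}(x_0)$; the ``$f$'' in Definition \ref{Def_ESO}'s monotonicity clause should be read accordingly.
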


As it is shown in
Theorem \ref{T_convergence_rate}
the speed of the algorithm (number of iterations needed to solve the problem)
depends on ESO parameter $v$ via the
term $\|x_0-x_*\|_v^2$.
 Moreover, for a given objective function and sampling $\hatS$, there may be more than one ESO that could be chosen. Suppose that we have two ESOs to choose from, characterized by two parameters $v_a$ and $v_b$ respectively, and let $v_a < v_b$. In this case, the ESO characterized by $v_a$ will give us (theoretically) faster convergence, and so it is obvious that this ESO should be used. Furthermore, $v_a$ is used as a parameter in Algorithm \ref{PCDM}, and so, intuitively, this faster theoretical convergence, is expected to lead to fast practical performance.

In Section 6.1 in \cite{Fercoq:accelerated} it was shown that for a quadratic objective, the ESO in Theorem \ref{thm:niceESO} is always worse than the ESO from Theorem \ref{thm:niceNewESO}. However, for a general objective the opposite can be true. The following simple example shows that the ESO from Theorems \ref{thm:niceNewESO} and \ref{thm:IonESO} can be $m$ times worse than the ESO from Theorem \ref{thm:niceESO}.
\begin{example}\label{exm:logisticLoss}
Consider the function
$$
f(x)=\sum_{j=1}^m  \underbrace{\log\left(1+e^{-x+\zeta j}\right)}_{f_j(x)},
$$
where $\zeta$ is large.
It is clear that $L_j$ is
$$L_j = \max_{x} (f_j(x))'' = \max_x \frac{e^{x+\zeta j}}{(e^\zeta+e^x)^2}
=\frac14.
$$
Therefore, from  Theorem \ref{thm:IonESO}, we obtain  $\tilde v=\tilde L = \frac{m}{4}$.

On the other hand, Theorem \ref{thm:niceESO} produces an ESO with
$$
v = \max_x  (f(x))'' \approx \frac14,
$$
provided that $\zeta$ is large, e.g. $\zeta=100$.
Hence, in this case, the ESO from Theorem \ref{thm:niceESO} will lead to an algorithm that is approximately $m$ times faster than if the ESOs from Theorems \ref{thm:niceNewESO} or \ref{thm:IonESO} were used.
\end{example}

\emph{Remark.} A thorough discussion of the ESO is presented in \cite[Section~4]{Richtarik12a}. Moreover, \cite[Section~5.5]{Richtarik12a} presents a list of parameters $v$ associated with a particular $f$ and sampling scheme $\hatS$ that give rise to an ESO. Indeed, each of samplings described in Section \ref{S_sampling} in this work gives rise to a   $v$ for which an ESO exists.


\end{document}